\newtheorem{theorem}{Theorem}[section]
\newtheorem{corollary}[theorem]{Corollary}
\newtheorem{lemma}[theorem]{Lemma}
\newtheorem{proposition}[theorem]{Proposition}
\theoremstyle{definition}
\newtheorem{definition}[theorem]{Definition}
\newtheorem{remark}[theorem]{Remark}
\newtheorem{Assumptions}{Hypothesis}[section]
\def\ds{\displaystyle}
\def\fh{\mathfrak{h}}
\def\fd{\mathfrak{d}}
\def\fg{\mathfrak{g}}
\numberwithin{equation}{section}
\title[Controllability for a degenerate population equation]{Controllability for a population equation with interior degeneracy}
\author[G. Fragnelli]{Genni Fragnelli}
\address[G. Fragnelli]{Dipartimento di Matematica\\ Universit\`{a} di Bari Aldo Moro, Via
E. Orabona 4, 70125 Bari - Italy}
\email{{\tt genni.fragnelli@uniba.it}}
\keywords{population equation, degenerate equation,  Carleman
estimates, observability inequalities}
\subjclass[2013]{35K65, 92D25, 93B05, 93B07}
\begin{document}

\begin{abstract}
We deal with a degenerate model in divergence form describing the dynamics of a population depending
on time, on age and on space. We assume that the degeneracy occurs in the interior of the spatial domain and we focus on null controllability. To this aim, first we prove Carleman estimates for the associated adjoint
problem, then, via cut off functions, we prove the existence of a null control function
localized in the interior of the space domain. We consider two cases: either the control region contains the degeneracy point $x_0$, or the control region is the union of two intervals each of them lying on one side of $x_0$. This paper complement some previous results, concluding the study of the subject.
\end{abstract}

\maketitle

\centerline{\emph{Dedicated to Irena, on the occasion of her 70th birthday,}}
\centerline{\emph{with sincere  esteem}}

\section{Introduction}

We consider the following degenerate population model describing the dynamics of a single species:
\begin{equation} \label{1}
\begin{cases}
\displaystyle {\frac{\partial y}{\partial t}+\frac{\partial y}{\partial a}}
-  (ky_x)_x+\mu(t, a, x)y =f(t,a,x)\chi_{\omega} & \quad \text{in } Q,\\
  y(t, a, 1)=y(t, a, 0)=0 & \quad \text{on }Q_{T,A},\\
 y(0, a, x)=y_0(a, x) &\quad \text{in }Q_{A,1},\\
 y(t, 0, x)=\int_0^A \beta (a, x)y (t, a, x) da  &\quad  \text{in } Q_{T,1}
\end{cases}
\end{equation}
where  $Q:=(0,T)\times(0,A)\times(0,1)$, $Q_{T,A} := (0,T)\times (0,A)$, $Q_{A,1}:=(0,A)\times(0,1)$ and
$Q_{T,1}:=(0,T)\times(0,1)$. Here $y(t,a,x)$  is the distribution of certain individuals  at location $x \in (0,1)$, at time $t\in(0,T)$, where $T$ is fixed, and of age $a\in (0,A)$. $A$ is the maximal age of life, while $\beta$ and $\mu$ are the natural fertility and the natural death rate, respectively. Thus, the formula $\int_0^A \beta y da$ denotes the distribution of newborn individuals at time $t$ and location $x$.  The function $k$, which is the dispersion coefficient, depends on the space variable $x$ and we assume that it degenerates in an interior point $x_0$ of the state space.
In particular, we say that
 
 \begin{definition}\label{Ass0} The function $k$ is
{\bf weakly degenerate  (WD)} if there exists $x_0 \in (0,1)$ such
that $k(x_0)=0$, $k>0$ on $[0, 1]\setminus \{x_0\}$, $k\in
W^{1,1}(0,1)$ and there exists $M\in (0,1)$ so that $(x-x_0)k'
\le Mk$ a.e. in $[0,1]$.
\end{definition}

\begin{definition}\label{Ass01}  The function $k$ is
{\bf strongly degenerate (SD)} if there exists $x_0 \in (0,1)$
such that $k(x_0)=0$, $k>0$ on $[0, 1]\setminus \{x_0\}$, $k\in
W^{1, \infty}(0,1)$ and there exists $M \in [1,2)$ so that
$(x-x_0)k' \le M k$ a.e. in $[0,1]$.
\end{definition}

 For example, as $k$ one can consider $k(x)=|x-x_0|^\alpha$, $\alpha >0$. 

Finally, in the model, $ \chi_\omega$ is the characteristic function of  the control region $\omega\subset(0,1)$ which can contain $x_0$ or can be the union of two intervals each of them lying on different sides of the degeneracy point, more precisely:
\[\omega= \omega_1 \cup \omega_2\]
where
\[
\omega_i= (\lambda_i , \beta_i ) \subset (0, 1), i = 1, 2, \text{ and } \beta_1 < x_0 < \lambda_2.
\]

It is known that the asymptotic behavior of the solution for  the Lotka-McKendrick system depends on the so called net reproduction rate $R_0$: indeed the solution can be exponentially growing if $R_0> 1$, exponentially decaying if $R_0<1$ or  tends to the steady state solution if $R_0=1$.   Clearly, if the system represents the distribution of a damaging insect population or of a pest population and $R_0>1$, it is very worrying. For this reason, recently great attention is given to null controllability issues. For example in \cite{he}, where \eqref{1} models an insect growth, the control corresponds to a removal of individuals by using pesticides. If
{\it $k$ is a constant or a strictly positive function}, null controllability for \eqref{1} is studied, for example, in \cite{An}.  If {\it $k$ degenerates at the boundary or at an interior point of the domain} and  $y$ is independent of $a$ we refer, for example, to \cite{acf}, \cite{fm}, \cite{fm1} and to \cite{fm2}, \cite{fm_hatvani}, \cite{fm_opuscola} if $\mu$ is singular at the same point of $k$. Actually,
\cite{aem} is the first paper where  $y$ depends on $t$, $a$ and $x$ and the dispersion coefficient $k$ degenerates.  In particular, in \cite{aem}, $k$ degenerates  at the boundary of the domain (for example $k(x) = x^\alpha,$  being $x \in (0,1)$ and $\alpha >0$). Using Carleman estimates for the adjoint problem, the authors prove null controllability for \eqref{1} under the condition $T\ge A$. The case $T<A$ is considered  in \cite{idriss}, \cite{em}, \cite{f_anona} and \cite{fJMPA}. In \cite{em} the problem is always in {\it divergence form} and the authors assume that $k$ degenerates only at a point of the boundary; moreover, they use
the fixed point technique in which
the birth rate $\beta$ must be of class $C^2(Q)$ (necessary requirement in the proof of \cite [Proposition 4.2]{em}). A more general result is obtained in \cite{f_anona} where $\beta$ is only a continuous function, but $k$ can degenerate at both extremal points. In \cite{idriss} the problem is in {\it divergence form} and $k$ degenerates at an interior point and it belongs to $C[0,1]\cap C^1([0,1]\setminus\{x_0\})$. Finally, in \cite{fJMPA}, we studied null controllability for \eqref{1} in {\it non divergence form} and with a diffusion coefficient degenerating at a one point of the boundary domain or in an interior point. In this paper we study the null controllability for \eqref{1} assuming that $k$  degenerates at $x_0 \in (0,1)$ and $T<A$ or $T >A$. We underline that here, contrary to \cite{idriss}, the 
function $k$ is less regular,  the control region $\omega$ not only can contain $x_0$, but can be also the union of two intervals each of them lying on one side of $x_0$ and $T$ can be greater than $A$. Moreover, contrary to \cite{aem}, where $T>A$ and $k$ degenerates at the boundary, here we assume that $T$ can be smaller than $A$ and $k$ degenerates at $x_0 \in (0,1)$.
Hence, this paper is the completion of all the previous ones. Moreover, the technique used in Theorem \ref{CorOb1'} can be also applied either when $k$ degenerates at the boundary of the domain, completing \cite{f_anona}, or when $k$ is in non divergence form and $k$ degenerates at the boundary or in the interior of the domain, completing \cite{fJMPA}.
Finally, observe that in this paper, as in \cite{f_anona} or in \cite{fJMPA}, we do not consider the positivity of the solution, even if it is clearly an interesting question to face: this problem is related to the minimum time, i.e. $T$ cannot be too small (see \cite{zuazua} for related results in non degenerate cases. This topic will be the subject of further investigations.

A final comment on the notation: by $c$ or $C$ we shall denote
{\em universal} strictly positive constants, which are allowed to vary from line to
line.

\section{Well posedness results}\label{sec3-1}
For the well posedness of the problem,
we assume the following hypotheses on the rates $\mu$ and $\beta$ :
\begin{Assumptions}\label{ratesAss}
 The  functions $\mu$ and $\beta$ are such that
\begin{equation}\label{3}
\begin{aligned}
&\bullet \beta \in C(\bar Q_{A,1}) \text{ and } \beta \geq0  \text{ in } Q_{A,1}, \\
&\bullet \mu \in C(\bar Q) \text{ and }  \mu\geq0\text{ in } Q.
\end{aligned}
\end{equation}
\end{Assumptions}
To prove well possessedness  of  \eqref{1}, we introduce, as in \cite{fm1}, the following
 Hilbert spaces
\[
\begin{aligned}
 H^1_{k} (0,1):=\Big\{ u \in W^{1,1}_0(0,1) \,:\, \sqrt{k} u' \in  L^2(0,1)\Big\}
\end{aligned}
\]
and
\[
H^2_k :=  \{ u \in H^1_k(0,1) |\,ku_x \in
H^1(0,1)\}. 
\]
 We have, as in \cite{fm1}, that the  operator
\[\mathcal A_0u:= (ku_{x})_x,\qquad    D(\mathcal A_0): = H^2_{k}(0,1)\]
is self--adjoint, nonpositive  and generates an analytic contraction
semigroup of angle $\pi/2$ on the space $L^2(0,1)$.

As in \cite{f_anona}, setting $ \mathcal A_a u := \ds \frac{\partial  u}{\partial a}$, we have that
\[
\mathcal Au:= \mathcal A_a u - 
\mathcal A_0 u,
\]
for 
\[
\begin{aligned}
u \in D(\mathcal A) =&\left\{u \in L^2(0,A;D(\mathcal A_0)) : \frac{\partial u}{\partial a} \in  L^2(0,A;H^1_k(0,1)), \right. \\&
    \left.\quad u(0, x)= \int_0^A \beta(a, x) u(a, x) da\right\},
\end{aligned}
\]
generates a strongly continuous semigroup on $L^2(Q_{A,1}):= L^2(0,A; L^2(0,1))$ (see also \cite{iannelli}). Moreover, the operator $B(t)$ defined as
\[
B(t) u:= \mu(t,a,x) u,
\]
for $u \in D(\mathcal A)$, can be seen as a bounded perturbation of $\mathcal A$ (see, for example, \cite{acf}); thus also
$
(\mathcal A + B(t), D(\mathcal A))
$ generates a strongly continuous semigroup.

Setting $L^2(Q):= L^2(0,T;L^2(Q_{A,1}))$, the following well posedness  result holds (see \cite{f_anona} for the proof):
\begin{theorem}\label{theorem_existence}
Assume that $k$ is weakly or strongly degenerate at $0$ and/or at $1$.  For all $f \in
L^2(Q)$ and $y_0 \in L^2(Q_{A,1})$, the system \eqref{1} admits a unique solution 
\[y \in \mathcal U:= C\big([0,T];
L^2(Q_{A,1}))\big) \cap L^2 \big(0,T;
H^1(0,A; H^1_k(0,1))\big)\]
and 
\begin{equation}\label{stimau}
\begin{aligned}
\sup_{t \in [0,T]} \|y(t)\|^2_{L^2(Q_{A,1})} &+\int_0^T\int_0^A\|\sqrt{k}y_x\|^2_{L^2(0,1)}dadt \\
&\le C \|y_0\|^2_{L^2(Q_{A,1})}  + C\|f\|^2_{L^2(Q)},
\end{aligned}
\end{equation}
where $C$ is a positive constant independent of $k, y_0$ and $f$.

In addition, if $f\equiv 0$,  then
$
y\in C^1\big([0,T];L^2(Q_{A,1})\big).
$ 
\end{theorem}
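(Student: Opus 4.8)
The plan is to recast \eqref{1} as an abstract Cauchy problem and then to combine the generation results recalled above with a direct energy estimate. Writing the system as
\[
\frac{\partial y}{\partial t} = -(\mathcal A + B(t))\,y + f\chi_\omega, \qquad y(0)=y_0,
\]
with $\mathcal A = \mathcal A_a - \mathcal A_0$ and $B(t)y = \mu y$, I would invoke the fact --- already established above --- that $-(\mathcal A+B(t))$ generates a strongly continuous evolution family $U(t,s)$ on $L^2(Q_{A,1})$. Since $y_0 \in L^2(Q_{A,1})$ and $f\chi_\omega \in L^2(Q)$, the variation of constants formula $y(t) = U(t,0)y_0 + \int_0^t U(t,s)\,f(s)\chi_\omega\,ds$ yields a unique mild solution with $y \in C([0,T]; L^2(Q_{A,1}))$; uniqueness is immediate from the generation property. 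This already gives the first component of the space $\mathcal U$.

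To extract the remaining regularity $y \in L^2(0,T;H^1(0,A;H^1_k(0,1)))$ together with the quantitative bound \eqref{stimau}, I would run the classical multiplier method on smooth approximations (for instance on solutions with data in $D(\mathcal A)$, or on a Galerkin scheme built from the eigenfunctions of $\mathcal A_0$) and then pass to the limit. Multiplying the equation by $y$ and integrating over $Q_{A,1}$ produces
\[
\frac12\frac{d}{dt}\|y(t)\|_{L^2(Q_{A,1})}^2 + \int_0^A\!\!\int_0^1 k\,y_x^2\,dx\,da + \int_{Q_{A,1}}\mu\,y^2 = \frac12\int_0^1\big(y^2(t,0,x)-y^2(t,A,x)\big)\,dx + \int_{Q_{A,1}} f\chi_\omega\, y,
\]
where the diffusion term has been integrated by parts in $x$. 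The crucial point is that the boundary contributions $[k y_x y]_{x=0}^{x=1}$ vanish: this uses $y(t,a,0)=y(t,a,1)=0$ together with $ky_x \in H^1(0,1)$ (so that $ky_x$ is absolutely continuous across the degeneracy point $x_0$), which is exactly what the membership $y \in H^2_k$ and the structural conditions of Definitions \ref{Ass0} and \ref{Ass01} guarantee.

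It then remains to absorb the right-hand side. The death term $\int_{Q_{A,1}}\mu\,y^2 \ge 0$ stays on the left since $\mu \ge 0$, and the boundary term $-\tfrac12\int_0^1 y^2(t,A,x)\,dx \le 0$ is simply discarded; the forcing is handled by Young's inequality, $\int_{Q_{A,1}} f\chi_\omega\,y \le \tfrac12\|f(t)\|^2_{L^2(Q_{A,1})}+\tfrac12\|y(t)\|^2_{L^2(Q_{A,1})}$. The remaining boundary term is controlled through the nonlocal birth law and the continuity of $\beta$:
\[
\frac12\int_0^1 y^2(t,0,x)\,dx = \frac12\int_0^1\Big(\int_0^A \beta(a,x)\,y(t,a,x)\,da\Big)^2 dx \le \frac{A\,\|\beta\|_{L^\infty(Q_{A,1})}^2}{2}\,\|y(t)\|_{L^2(Q_{A,1})}^2 .
\]
Collecting these bounds gives a differential inequality of the form $\tfrac{d}{dt}\|y\|^2 + 2\int_0^A\!\int_0^1 k y_x^2 \le C\|y\|^2 + \|f\|^2$, and Gronwall's lemma yields both the uniform-in-time bound on $\|y(t)\|_{L^2(Q_{A,1})}$ and, after integrating in $t$, the bound on $\int_0^T\!\int_0^A \|\sqrt{k}\,y_x\|^2\,da\,dt$, that is exactly \eqref{stimau}; in particular $\sqrt{k}\,y_x \in L^2$, which is the $H^1_k$-regularity in $x$, while the $H^1$-regularity in $a$ is read off from the equation and the definition of $D(\mathcal A)$.

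Finally, for the extra statement when $f\equiv 0$, I would exploit the analyticity of the semigroup generated by $\mathcal A_0$: for homogeneous forcing the mild solution $y(t)=U(t,0)y_0$ is in fact classical, and the parabolic smoothing transferred from $e^{t\mathcal A_0}$ (the age transport $\mathcal A_a$ and the bounded perturbation $B(t)$ not destroying differentiability of the evolution family) gives $y \in C^1([0,T];L^2(Q_{A,1}))$. I expect the main obstacle to be the rigorous justification of the energy identity in the degenerate regime --- namely the vanishing of the boundary terms at the degeneracy point $x_0$ and the approximation argument that makes the formal multiplier computation legitimate --- rather than the abstract existence, which is essentially delivered by the generation results quoted above. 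For full details one can follow \cite{f_anona}.
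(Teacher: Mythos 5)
The paper does not actually prove this theorem: it is quoted verbatim from \cite{f_anona}. Your overall strategy --- abstract generation of an evolution family for $-(\mathcal A+B(t))$, variation of constants for existence and uniqueness of a mild solution, then the multiplier $y$ combined with the renewal condition, Cauchy--Schwarz applied to $y(t,0,x)=\int_0^A\beta y\,da$, and Gronwall to reach \eqref{stimau} --- is exactly the standard route taken there, and your energy computation (vanishing of the lateral boundary terms $[ky_xy]_{x=0}^{x=1}$ thanks to the Dirichlet conditions and $ky_x\in H^1(0,1)\subset C([0,1])$, discarding of $-\frac12\int_0^1y^2(t,A,x)\,dx$ and of $\int\mu y^2$, absorption of the forcing by Young) is correct.

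Two points need repair. First, and most seriously, your justification of the final assertion $y\in C^1([0,T];L^2(Q_{A,1}))$ for $f\equiv 0$ through ``analyticity transferred from $e^{t\mathcal A_0}$'' does not work: the full generator contains the age-transport part $\mathcal A_a$, which generates a translation semigroup with no smoothing effect in $a$, so the solution semigroup on $L^2(Q_{A,1})$ is only strongly continuous, not analytic, and differentiability of $t\mapsto y(t)$ for arbitrary $y_0\in L^2(Q_{A,1})$ cannot be extracted from parabolic regularization of $\mathcal A_0$ alone; a genuinely different argument is required for this part. Second, the energy inequality you derive controls $\sup_t\|y(t)\|^2_{L^2(Q_{A,1})}$ and $\int_0^T\int_0^A\|\sqrt k\,y_x\|^2\,da\,dt$, i.e.\ it places $y$ in $L^2(0,T;L^2(0,A;H^1_k(0,1)))$, but membership in $\mathcal U$ also requires $\partial_a y\in L^2(0,T;L^2(0,A;H^1_k(0,1)))$; ``reading this off from the equation and $D(\mathcal A)$'' is not sufficient for a mild solution with merely $L^2$ data, so this needs a separate approximation/regularity argument. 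The existence, uniqueness and the estimate \eqref{stimau} themselves are, however, correctly obtained by your scheme.
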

\section{Carleman estimates}\label{sec-3}

In this section we show degenerate 
Carleman estimates for the following adjoint system associated to \eqref{1}:
\begin{equation} \label{adjoint}
\begin{cases}
\ds \frac{\partial z}{\partial t} + \frac{\partial z}{\partial a}
+(k(x)z_{x})_x-\mu(t, a, x)z =f ,& (t,a,x) \in Q,\\
  z(t, a, 0)=z(t, a, 1)=0, & (t,a) \in Q_{T,A},\\
   z(t,A,x)=0, & (t,x) \in Q_{T,1}.
\end{cases}
\end{equation}

On  $k$ we make additional assumptions:
\begin{Assumptions}\label{BAss01} The function $k$ is {\bf(WD)} or {\bf (SD)}.  Moreover,  if $M > \ds \frac{4}{3}$, then 
there exists a constant $\theta \in \left(0, M\right]$ such that
\begin{equation}\label{dainfinito_1}
\begin{array}{ll}
x \mapsto \dfrac{k(x)}{|x-x_0|^{\theta}} &
\begin{cases}
& \mbox{ is non increasing on the left of $x=x_0$,}\\
& \mbox{ is non decreasing on the right of $x=x_0$}.
\end{cases}
\end{array}
\end{equation}
In addition, when $ M >\displaystyle \frac{3}{2}$ the function in  \eqref{dainfinito_1}  is bounded below away from $ 0$
and there exists a constant $\Gamma >0$ such that
\begin{equation}\label{Sigma}
|k'(x)|\leq \Gamma |x-x_0|^{2\theta-3} \mbox{ for a.e. }x\in
[0,1].
\end{equation}
\end{Assumptions}
Now, let us introduce the weight function

\begin{equation}\label{13}
\varphi(t,a,x):=\Theta(t,a)\psi(x),
\end{equation}
where 
\begin{equation}\label{theta}
\Theta(t,a):=\frac{1}{[t(T-t)]^4a^4}\quad \text{and}\quad
\psi(x) :=
c_1\left[\int_{x_0}^x \frac{y-x_0}{k(y)}dy- c_2\right].
\end{equation}
The following estimate holds:

\begin{theorem}\label{Cor1}
Assume that Hypothesis $\ref{BAss01}$ is satisfied. Then,
there exist two strictly positive constants $C$ and $s_0$ such that every
solution $v$ of \eqref{adjoint} in
\[
\mathcal{V}:=L^2\big(Q_{T,A}; H^2_k(0,1)\big) \cap H^1\big(0,T; H^1(0,A;H^1_k(0,1))\big)\]
satisfies, for all $s \ge s_0$,
\[
\begin{aligned}
&\int_{Q} \left(s\Theta k (v_x)^2 + s^3 \Theta^3
\frac{(x-x_0)^2}{k}v^2\right)e^{2s\varphi}dxdadt\\
&\le C\left(\int_{Q} f^{2}e^{2s\varphi}dxdadt +
sc_1\int_0^T\int_0^A\left[k\Theta e^{2s\varphi}(x-x_0)(v_x)^2
dadt\right]_{x=0}^{x=1}dadt\right)
\end{aligned}\]
\end{theorem}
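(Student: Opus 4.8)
The plan is to establish a Carleman estimate for the degenerate parabolic operator by the classical conjugation-and-integration-by-parts method, treating the first-order transport terms $\partial_t + \partial_a$ together with the degenerate elliptic part $(kz_x)_x$. The weight $\varphi = \Theta(t,a)\psi(x)$ is engineered precisely for this purpose: the factor $\psi(x) = c_1\big[\int_{x_0}^x \frac{y-x_0}{k(y)}\,dy - c_2\big]$ is the natural weight for the operator $(k\,\cdot_x)_x$ degenerating at $x_0$, since $\psi'(x) = c_1\frac{x-x_0}{k}$ and hence $k\psi'(x) = c_1(x-x_0)$ is smooth across the degeneracy. The first step is to set $w := e^{s\varphi}v$ and compute $P_s w := e^{s\varphi}\big(\partial_t + \partial_a + (k w_x)_x - \mu w\big)(e^{-s\varphi}w)$, splitting it into a formally self-adjoint part $P_s^+w$ and a formally skew-adjoint part $P_s^-w$. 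Expanding $\|P_sw\|_{L^2(Q)}^2 = \|P_s^+w\|^2 + \|P_s^-w\|^2 + 2\langle P_s^+w, P_s^-w\rangle$ and recognizing that $e^{s\varphi}f = P_sw$, the core of the argument is to show that the cross term $\langle P_s^+w, P_s^-w\rangle$ produces, after integration by parts, the desired positive bulk terms $s\Theta k(v_x)^2$ and $s^3\Theta^3\frac{(x-x_0)^2}{k}v^2$ plus the boundary contribution at $x=0,1$.

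The key computations I would carry out are the following. First, since $k\psi' = c_1(x-x_0)$, the dominant distributed term in the double product comes from differentiating $k\psi'$ in $x$: this yields a coefficient involving $c_1$ (times lower-order corrections controlled by the structural hypothesis $(x-x_0)k' \le Mk$), which is exactly what makes the $v^2$-term appear with the weight $s^3\Theta^3\frac{(x-x_0)^2}{k}$. Second, the transport terms $\partial_t\varphi = \dot\Theta\psi$ and $\partial_a\varphi = \Theta_a\psi$ generate terms proportional to $s\Theta'_{t,a}$; because $\Theta = [t(T-t)]^{-4}a^{-4}$ blows up at $t=0,T$ and $a=0$, these time/age weights are absorbed into the parabolic positive terms for $s$ large, and the vanishing condition $z(t,A,x)=0$ together with the blow-up of $\Theta$ at the endpoints kills the boundary contributions in $t$ and $a$. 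This is the standard mechanism: one needs to verify that each "bad" term is of strictly lower order in $s$ or in $\Theta$ than the two retained positive terms, and hence absorbable for $s \ge s_0$.

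The main obstacle — and the place where Hypothesis \ref{BAss01} does all its work — is controlling the terms involving $k'$ near the degeneracy point $x_0$, distinguishing the weakly degenerate case from the strongly degenerate one. In the (WD) case $M<1$ the weight $\psi$ is bounded and the integrals converge straightforwardly, but in the (SD) case with $M \in [1,2)$ the function $x\mapsto \int_{x_0}^x \frac{y-x_0}{k(y)}\,dy$ may be unbounded and the boundary behavior at $x_0$ must be handled delicately. The monotonicity condition \eqref{dainfinito_1} on $x\mapsto k(x)/|x-x_0|^\theta$ is what guarantees that the ratio $\frac{(x-x_0)^2}{k}$ and the products $k'(x-x_0)/k$ remain integrable and correctly signed on each side of $x_0$, while the extra bound \eqref{Sigma} on $|k'|$ for $M>3/2$ is needed to dominate the most singular remainder terms that arise when integrating by parts across $x_0$. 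Concretely, I expect the hard estimate to be showing that the terms carrying $k'$, after using $(x-x_0)k'\le Mk$ and \eqref{dainfinito_1}, contribute a factor strictly smaller than the coefficient of the main $v^2$-term, so that for an appropriate choice of the constants $c_1$ (large) and $c_2$ (making $\psi \le 0$ so the boundary terms have the right sign) the positivity survives uniformly in $s$. Finally I would apply a Caccioppoli-type inequality and return from $w$ to $v=e^{-s\varphi}w$ to recover the stated estimate with the boundary term at $x=0,1$ on the right-hand side.
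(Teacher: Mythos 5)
Your overall strategy is the one the paper follows: conjugate by $e^{s\varphi}$, split the conjugated operator into a self-adjoint part $L_s^+$ and a skew-adjoint part $L_s^-$, bound the cross term $\langle L_s^+w,L_s^-w\rangle$ from below by $s\int\Theta k w_x^2+s^3\int\Theta^3\frac{(x-x_0)^2}{k}w^2$ using $k\psi'=c_1(x-x_0)$ and $2-\frac{(x-x_0)k'}{k}\ge 2-M$, and collect the surviving boundary term at $x=0,1$. Your identification of the dominant $s^3$-term and of the role of \eqref{dainfinito_1} and \eqref{Sigma} matches Lemmas 3.2--3.4 of the paper.

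There is, however, one genuine gap: the treatment of the zeroth-order term $\mu v$. You fold $-\mu w$ into the conjugated operator $P_s$ from the start, but then the cross term contains expressions such as $\langle \mu w, w_t+w_a\rangle$ and $\langle \mu w, sk\varphi_x w_x\rangle$, whose integration by parts requires derivatives of $\mu$ — and $\mu$ is only assumed continuous. The paper instead proves the estimate first for $\mu\equiv 0$ and then treats $\bar f=f+\mu v$ as the source. This is not merely cosmetic: the resulting term $\int_Q\mu^2v^2e^{2s\varphi}$ cannot be absorbed directly by the left-hand side, because the zero-order observation weight $\frac{(x-x_0)^2}{k}$ \emph{vanishes} at $x_0$, so $\|v e^{s\varphi}\|^2_{L^2}$ is not dominated by $s^3\int\Theta^3\frac{(x-x_0)^2}{k}v^2e^{2s\varphi}$. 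The paper bridges this with a Young-inequality splitting $v^2=\big(\frac{k^{1/3}}{|x-x_0|^{2/3}}v^2\big)^{3/4}\big(\frac{(x-x_0)^2}{k}v^2\big)^{1/4}$ followed by the Hardy--Poincar\'e inequality of \cite{fm} applied to $\nu=e^{s\varphi}v$ (estimates \eqref{sopra}--\eqref{hpapplbis1}); some such Hardy-type step is indispensable and is missing from your outline. Two smaller points: the Caccioppoli inequality you invoke at the end plays no role in this theorem (it enters only in the localized estimates of Section 4), and the constant $c_2$ is chosen to make $\varphi<0$ so that $w=e^{s\varphi}v$ stays in $\mathcal V$ and the $t$- and $a$-boundary terms vanish, not to sign the $x$-boundary term, which is simply kept on the right-hand side.
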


Clearly the previous  Carleman estimate holds
for every function $v$ that satisfies \eqref{adjoint} in $(0,T)\times(0,A)\times (B,C)$ as long as $(0,1)$ is substituted by $(B,C)$ and $k$ satisfies Hypothesis \ref{BAss01} in $(B,C)$.

\vspace{0.4cm}

\paragraph{Proof of Theorem \ref{Cor1}}
The proof of Theorem \ref{Cor1} follows the ideas of the one of
\cite[Theorem 3.1]{f_anona} or \cite[Theorem 3.6]{fJMPA} (for the non divergence case). As in the previous papers, we consider, first of all, the case when $\mu\equiv 0$: for
every $s> 0$ consider the function
\[
w(t,a, x) := e^{s \varphi(t,a, x)}v(t,a, x),
\]
where $v$ is any solution of \eqref{adjoint} in $\mathcal{V}$, so that
also $w\in\mathcal{V}$, since $\varphi<0$.
Moreover, $w$ satisfies 
\begin{equation}\label{1'}
\begin{cases}
(e^{-s\varphi}w)_t + (e^{-s\varphi}w)_a +(k (e^{-s\varphi}w)_x)_{x}  =f(t,a,x), & (t,x) \in
Q,
\\[5pt]
w(0, a, x)= w(T,a, x)= 0, & (a,x) \in Q_{A,1},
\\[5pt]
w(t,A,x)=w(t,0,x)=0, & (t,x) \in  Q_{T,1},
\\[5pt]
w(t, a,0)= w(t, a, 1)= 0, & (t,a) \in  Q_{T,A}.
\end{cases}
\end{equation} and \cite[Lemma 3.1]{f_anona} still holds.
In particular, setting 
\[
\begin{cases}
L^+_sw :=( kw_{x})_x
 - s (\varphi_t+ \varphi_a) w + s^2k \varphi_x^2 w,
\\[5pt]
L^-_sw := w_t + w_a-2sk\varphi_x w_x -
 s(k\varphi_{x})_xw,
 \end{cases}
\]we have
\begin{lemma}\label{lemma1}[see \cite[Lemma 3.1]{f_anona}] Assume Hypothesis $\ref{BAss01}$.
The following identity holds
\begin{equation}\label{D&BT}
\left.
\begin{aligned}
<L^+_sw,L^-_sw>_{L^2(Q)}
\;&=\;
\frac{s}{2} \int_Q(\varphi_{tt}+\varphi_{aa}) w^2dxdadt \\
&+ s \int_Qk(x) (k(x)
\varphi_x)_{xx} w w_xdxdadt
\\&- 2s^2 \int_Qk \varphi_x \varphi_{tx}w^2dxdadt - 2s^2\int_{Q}k \varphi_x\varphi_{xa}w^2dxdadt\\
&+s
\int_Q(2 k^2\varphi_{xx} + kk'\varphi_x)w_x^2 dxdadt
\\
&+ s^3 \int_Q(2k \varphi_{xx} + k'\varphi_x)k
\varphi^2_x w^2dxdadt \\
&+s\int_{Q}\varphi_{at}w^2 dxdadt.
\end{aligned}\right\}\;\text{\{D.T.\}}
\end{equation}
\begin{equation}\nonumber
\hspace{55pt}
\text{\{B.T.\}}\;\left\{
\begin{aligned}
&
\int_{Q_{T,A}}[kw_xw_t]_{0}^{1} dadt+\int_{Q_{T,A}}\big[kw_xw_a\big]_0^{1}dadt\\& -\frac{s}{2}\int_{Q_{A,1}} \left[\varphi_a w^2\right]_0^T dxda.\\& +
\int_{Q_{T,A}}[-s\varphi_x (k(x)w_x)^2 +s^2k(x)\varphi_t \varphi_x w^2\\& -
s^3 k^2\varphi_x^3w^2 ]_{0}^{1}dadt\\
& +
\int_{Q_{T,A}}[-sk(x)(k(x)\varphi_x)_xw w_x]_{0}^{1}dadt\\&+ s^2 \int_{Q_{T,A}}\big[k\varphi_x\varphi_aw^2\big]_0^{1}dadt\\[3pt]&
-\frac{1}{2}\int_{Q_{T,1}} \big[kw_x^2\big]_0^Adxdt
+\frac{1}{2}\int_{Q_{T,1}}\big[ \big(s^2k \varphi_x^2 \\&- s (\varphi_t+\varphi_a) \big)w^2\big]_0^Adxdt.\end{aligned}\right.
\end{equation}
\end{lemma}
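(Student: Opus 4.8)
The plan is to establish \eqref{D&BT} as a pure integration-by-parts (Lagrange-type) identity that holds for every $w\in\mathcal{V}$ with the boundary behaviour recorded in \eqref{1'}, independently of the equation itself. Writing $L^+_sw=P_1+P_2+P_3$ with $P_1=(kw_x)_x$, $P_2=-s(\varphi_t+\varphi_a)w$, $P_3=s^2k\varphi_x^2w$, and $L^-_sw=N_1+N_2+N_3$ with $N_1=w_t+w_a$, $N_2=-2sk\varphi_xw_x$, $N_3=-s(k\varphi_x)_xw$, I would expand
\[
\langle L^+_sw,L^-_sw\rangle_{L^2(Q)}=\sum_{i,j=1}^{3}\langle P_i,N_j\rangle_{L^2(Q)}
\]
and evaluate the nine pairings one by one, always transferring derivatives onto the smooth weight factors rather than onto $w$.

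The computation itself is bookkeeping. Against $N_1=w_t+w_a$ I would integrate by parts in $t$ and in $a$; since $k$ is $(t,a)$-independent and $\varphi=\Theta\psi$, the pairings $\langle P_i,N_1\rangle$ produce the $\tfrac{s}{2}(\varphi_{tt}+\varphi_{aa})w^2$ and $s\varphi_{at}w^2$ distributed terms together with the $t$- and $a$-boundary contributions. Against $N_2$ and $N_3$ I would integrate by parts in $x$; the crucial simplification is that the choice \eqref{theta} gives $k\varphi_x=c_1\Theta(x-x_0)$ and hence $(k\varphi_x)_x=c_1\Theta$, keeping all coefficients smooth across $x_0$. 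It is worth stressing that several of the listed distributed terms arise only after \emph{combining} pairings: for instance the $-2s^2\int_Q k\varphi_x\varphi_{tx}w^2$ term comes half from $\langle P_3,N_1\rangle$ (after $t$-integration by parts) and half from $\langle P_2,N_2\rangle$ (after $x$-integration by parts), and likewise for the $\varphi_{xa}$ term; the $s^3$- and $s$-weighted terms and the $x=0,1$ package in \{B.T.\} assemble in the same way. I would keep every boundary term that appears, postponing to the proof of Theorem \ref{Cor1} the simplifications coming from the Dirichlet condition $w(t,a,0)=w(t,a,1)=0$ and from the vanishing of $w$ (hence of $w_x$) on the faces $t\in\{0,T\}$ and $a\in\{0,A\}$.

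The delicate point, and the one I expect to be the main obstacle, is justifying the $x$-integrations by parts across the interior degeneracy, where $k(x_0)=0$ and $\varphi_x$ blows up. I would split $\int_0^1=\int_0^{x_0}+\int_{x_0}^1$ and prove that the boundary terms generated at $x=x_0^{\pm}$ vanish in the limit, so that the two one-sided identities glue into \eqref{D&BT} with no spurious interior contribution. Here Hypothesis \ref{BAss01} and the membership $w\in H^2_k(0,1)$ are essential: $w$ is continuous while $\sqrt{k}\,w_x\in L^2$ and $kw_x\in H^1$, and the weight factors entering the boundary evaluations, such as $k^2\varphi_x=k\,c_1\Theta(x-x_0)$, $k^2\varphi_x^3=c_1^3\Theta^3(x-x_0)^3/k$ and $(k\varphi_x)_x=c_1\Theta$, are controlled near $x_0$ by the degeneracy exponent ($M<2$, so the relevant powers of $|x-x_0|$ are positive) together with the monotonicity and growth conditions \eqref{dainfinito_1}--\eqref{Sigma} on $k/|x-x_0|^{\theta}$ and on $k'$. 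Once the vanishing at $x_0^{\pm}$ is secured, collecting the nine expanded pairings reproduces exactly the distributed terms \{D.T.\} and the boundary terms \{B.T.\}, which completes the proof.
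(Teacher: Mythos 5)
Your proposal is correct and follows essentially the same route as the paper, which itself only cites \cite[Lemma 3.1]{f_anona} and notes that the integrations by parts are justified by the definition of $D(\mathcal A)$ and the choice of $\varphi$: you expand the nine pairings, exploit $k\varphi_x=c_1\Theta(x-x_0)$ and $(k\varphi_x)_x=c_1\Theta$, and correctly isolate the only delicate point, namely splitting at $x_0$ and showing that the interior boundary contributions vanish thanks to Hypothesis \ref{BAss01} and $w\in H^2_k(0,1)$ (in particular $kw_x\in H^1(0,1)$, which controls the evaluation of terms like $\varphi_x(kw_x)^2$ at $x_0^{\pm}$ in the strongly degenerate case).
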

 We underline the fact that 
in this case all integrals and integrations by parts are justified by the
definition of $D(\mathcal A)$ and the choice of $\varphi$, while, if the degeneracy is at the boundary of the domain as in \cite{f_anona}, they were guaranteed by the choice of Dirichlet
conditions at $x=0$ or $x=1$, i.e. where the operator is degenerate.

As a consequence of the definition of $\varphi$, one has the next estimate:
\begin{lemma}\label{lemma2}Assume Hypothesis $\ref{BAss01}$. There exist two  strictly positive constants $C$ and $s_0$ such
that, for all $s\ge s_0$,  all solutions $w$ of \eqref{1'}
satisfy the following estimate
\[
sC\int_{Q}\Theta k w_x^2 dxdadt
+s^3C\int_{Q}\Theta^3 \frac{(x-x_0)^2}{k}w^2 dxdadt \le \big\{D.T.\big\} .
\]
\end{lemma}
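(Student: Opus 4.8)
The plan is to substitute the explicit weight $\varphi=\Theta\psi$ into the distributed terms $\{D.T.\}$ of Lemma \ref{lemma1}, to extract from them the two genuinely leading integrals, and to show that everything else is lower order and absorbable for $s$ large. First I would record the derivatives of $\psi$. By \eqref{theta} one has $\psi'(x)=c_1\dfrac{x-x_0}{k(x)}$, whence $k\psi'=c_1(x-x_0)$, $(k\psi')'=c_1$ and
\[
k\varphi_x=c_1\Theta(x-x_0),\qquad (k\varphi_x)_x=c_1\Theta,\qquad (k\varphi_x)_{xx}=0,
\]
while $\psi''=c_1\dfrac{k-(x-x_0)k'}{k^{2}}$. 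In particular the second summand of $\{D.T.\}$, namely $s\int_Q k(k\varphi_x)_{xx}\,w w_x\,dxdadt$, vanishes identically: this is exactly the reason for the choice of $\psi$.

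Next I would isolate the leading terms. A direct computation with the formulas above gives
\[
2k^{2}\varphi_{xx}+kk'\varphi_x=c_1\Theta\big(2k-(x-x_0)k'\big),\qquad
(2k\varphi_{xx}+k'\varphi_x)k\varphi_x^{2}=c_1^{3}\Theta^{3}\frac{(x-x_0)^{2}}{k}\Big(2-\frac{(x-x_0)k'}{k}\Big).
\]
Since $c_1>0$ and the degeneracy condition $(x-x_0)k'\le Mk$ of Definitions \ref{Ass0}--\ref{Ass01} holds with $M<2$, both bracketed factors are bounded below by $(2-M)>0$, so the fifth and sixth summands of $\{D.T.\}$ satisfy
\[
s\int_Q(2k^{2}\varphi_{xx}+kk'\varphi_x)w_x^{2}\,dxdadt+s^{3}\int_Q(2k\varphi_{xx}+k'\varphi_x)k\varphi_x^{2}w^{2}\,dxdadt
\ \ge\ sc_1(2-M)\int_Q\Theta k\,w_x^{2}\,dxdadt+s^{3}c_1^{3}(2-M)\int_Q\Theta^{3}\frac{(x-x_0)^{2}}{k}w^{2}\,dxdadt,
\]
which are precisely the two integrals on the left-hand side of the claimed estimate.

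It then remains to dominate the surviving first, third, fourth and seventh summands, for which I would use that $\Theta$ is bounded below on $\overline Q$ by a positive constant $c_0$ (its infimum, attained as $t\to T/2$, $a\to A$, is $(T^{2}/4)^{-4}A^{-4}>0$), together with the pointwise bounds $|\Theta_t|,|\Theta_a|\le C\Theta^{5/4}$ and $|\Theta_{tt}|,|\Theta_{aa}|,|\Theta_{at}|\le C\Theta^{3/2}$, obtained by differentiating \eqref{theta}. The third and fourth summands equal $-2s^{2}c_1^{2}\int_Q\frac{(x-x_0)^{2}}{k}(\Theta\Theta_t+\Theta\Theta_a)w^{2}\,dxdadt$; they already carry the singular weight $\frac{(x-x_0)^{2}}{k}$ and only the factor $s^{2}\Theta^{9/4}$, and since $\Theta^{9/4}\le c_0^{-3/4}\Theta^{3}$ they are absorbed into the sixth term once $s$ is large enough.

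The hard part is the first and seventh summands, $\frac{s}{2}\int_Q(\Theta_{tt}+\Theta_{aa})\psi\,w^{2}\,dxdadt$ and $s\int_Q\Theta_{at}\psi\,w^{2}\,dxdadt$: here the spatial weight is only the bounded function $\psi$, which does \emph{not} vanish at $x_0$, so these terms match neither leading integral near the degeneracy point and cannot be compared with them directly. To handle them I would first bound both by $Cs\int_Q\Theta^{3/2}|\psi|\,w^{2}\,dxdadt$ and then split, by Young's inequality,
\[
s\Theta^{3/2}|\psi|w^{2}\ \le\ \tfrac{\eta}{2}\,s^{3}\Theta^{3}\frac{(x-x_0)^{2}}{k}w^{2}+\frac{C}{2\eta}\,s^{-1}\frac{k}{(x-x_0)^{2}}\,w^{2},
\]
sending the first piece into the sixth term. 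For the second piece I would invoke the weighted Hardy--Poincaré inequality $\int_0^1\frac{k}{(x-x_0)^{2}}w^{2}\,dx\le C_{HP}\int_0^1 k\,w_x^{2}\,dx$, valid under Hypothesis \ref{BAss01} (the conditions \eqref{dainfinito_1}--\eqref{Sigma} being exactly what secure this inequality and the boundedness of $\psi$ in the strongly degenerate range $M\ge1$), which yields
\[
s^{-1}\int_Q\frac{k}{(x-x_0)^{2}}w^{2}\,dxdadt\le C_{HP}s^{-1}\int_Q k\,w_x^{2}\,dxdadt\le C_{HP}c_0^{-1}s^{-2}\Big(s\int_Q\Theta k\,w_x^{2}\,dxdadt\Big),
\]
a vanishing fraction of the fifth term. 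Choosing $\eta$ small and then $s_0$ large enough that all absorbed remainders together do not exceed half of the two leading integrals gives the assertion with $C=\tfrac12\min\{c_1(2-M),\,c_1^{3}(2-M)\}$. I expect this Young-splitting combined with the Hardy--Poincaré inequality to be the only delicate point; once the strongly degenerate range is accommodated through \eqref{dainfinito_1}--\eqref{Sigma}, the remaining estimates are routine.
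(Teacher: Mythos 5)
Your computation of the derivatives of $\psi$, the identification of the two leading integrals through $2-\frac{(x-x_0)k'}{k}\ge 2-M>0$, and the absorption of the $\Theta\Theta_t$, $\Theta\Theta_a$ terms (which already carry the weight $\frac{(x-x_0)^2}{k}$) all match the paper's proof. The gap is in your treatment of the terms $\frac{s}{2}\int_Q(\Theta_{tt}+\Theta_{aa})\psi w^2$ and $s\int_Q\Theta_{at}\psi w^2$: the Hardy--Poincar\'e inequality you invoke,
\[
\int_0^1\frac{k}{(x-x_0)^2}\,w^2\,dx\;\le\;C\int_0^1 k\,w_x^2\,dx,
\]
is \emph{false} under Hypothesis \ref{BAss01}. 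Solutions of \eqref{1'} do not vanish at the interior point $x_0$, and the inequality of \cite[Proposition 2.6]{fm} with weight $p$ requires $p/|x-x_0|^q$ to be nonincreasing on the left and nondecreasing on the right of $x_0$ for some $q>1$ --- in particular $p$ must vanish at $x_0$ to order greater than one, so that $p/(x-x_0)^2$ remains integrable against a function that is nonzero at $x_0$. Taking $p=k$ violates this whenever $k$ is weakly degenerate: for $k=|x-x_0|^{1/2}$ (so $M=1/2$, and neither \eqref{dainfinito_1} nor \eqref{Sigma} is in force) and any admissible $w$ with $w(x_0)\neq 0$, the left-hand side is $+\infty$ while the right-hand side is finite. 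Your Young splitting with exponents $\tfrac12,\tfrac12$ is therefore too crude: it pushes the entire singularity onto the factor that is supposed to be controlled by $\int k\,w_x^2$.

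The paper's remedy (see \eqref{quasfin}, which quotes \cite[Lemma 4.1]{fm1}, and the identical mechanism displayed explicitly in \eqref{sopra}--\eqref{hpapplbis1}) is to split with exponents $\tfrac34$ and $\tfrac14$,
\[
w^2=\Big(\frac{k^{1/3}}{|x-x_0|^{2/3}}\,w^2\Big)^{3/4}\Big(\frac{(x-x_0)^2}{k}\,w^2\Big)^{1/4},
\]
distributing the powers of $s$ and $\Theta$ accordingly, so that after Young's inequality the Hardy--Poincar\'e step is applied to the weight $\frac{k^{1/3}}{|x-x_0|^{2/3}}=\frac{\gamma}{(x-x_0)^2}$ with $\gamma=(k|x-x_0|^4)^{1/3}$ if $M>\frac43$, or with $p=|x-x_0|^{4/3}$ if $M\le\frac43$. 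Both of these weights vanish at $x_0$ to an order $q\in(1,2)$ (this is precisely what \eqref{dainfinito_1} is designed to guarantee in the strongly degenerate range), and both are dominated by $Ck$, which returns exactly the absorbable term $Cs\int_Q\Theta k\,w_x^2$. Replacing your display by this $\tfrac34$--$\tfrac14$ splitting, the rest of your argument goes through and coincides with the paper's.
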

\begin{proof}
Using the definition of $\varphi$, the distributed terms given in Lemma \ref{lemma1}
take the form
\[
\text{\{D.T.\}}=\;\left\{\begin{aligned}
&\frac{s}{2}\int_{Q}(\Theta_{tt}+ \Theta_{aa})\psi w^2 dxdadt
-2s^2c_1\int_{Q}\Theta{\Theta_t}\frac{(x-x_0)^2}{k}w^2 dxdadt\\
&-2s^2c_1\int_{Q}\Theta{\Theta_a}\frac{(x-x_0)^2}{k}w^2 dxdadt\\
&+ sc_1 \int_{Q}\Theta \left(2-\frac{k'}{k}
(x-x_0)\right)k(w_x)^2 dxdadt\\
&+ s^3c_1^3\int_{Q}\Theta^3\left(2-\frac{k'}{k}
(x-x_0)\right)\frac{(x-x_0)^2}{k}w^2dxdadt\\
&+ s  \int_{Q}\Theta_{ta}\psi w^2 dxdadt.
\end{aligned}\right.
\]
Because of the choice of $\varphi(x)$, one has, as in \cite{fm1}, 
\[
  2-\frac{(x-x_0)k'}{k}\ge 2-M \quad \text{a.e. } \; x\in[0,1].
\]
Thus, there exists $C>0$ such that, the distributed terms satisfy the estimate
\begin{equation}\label{aaaaa}
\begin{aligned}
\{D.T.\} &\ge\frac{s}{2}\int_{Q}(\Theta_{tt}+ \Theta_{aa})\psi w^2 dxdadt -s^2C\int_{Q}|\Theta\Theta_t|\frac{(x-x_0)^2}{k}w^2
dxdadt\\
&-s^2C\int_{Q}|\Theta\Theta_a|\frac{(x-x_0)^2}{k}w^2
dxdadt\\
&+ s C\int_{Q}\Theta (w_x)^2 dxdadt+ s^3C\int_{Q}\Theta^3\frac{(x-x_0)^2}{k}w^2
dxdadt\\
&+s\int_{Q}\Theta_{ta}\psi w^2 dxdadt.
\end{aligned}
\end{equation}
By \cite[Lemma 3.5]{fJMPA}, we conclude that, for $s$ large enough,
\[
\begin{aligned}
s^2C\int_{Q}(|\Theta\Theta_t|+|\Theta \Theta_a|)\frac{(x-x_0)^2}{k}
 w^2 dxdadt&\le C s^2
\int_{Q}\Theta^3\frac{(x-x_0)^2}{k}w^2 dxdadt\\
&\le \frac{C^3}{4}s^3\int_{Q}\Theta^3 \frac{(x-x_0)^2}{k}w^2 dxdadt.
\end{aligned}
\]
Again as in \cite[Lemma 4.1]{fm1}, we get
\begin{equation}\label{quasfin}
\begin{aligned}
\left|  \frac{s}{2}\int_{Q}(\Theta_{tt} + \Theta_{aa})\psi
w^2dxdadt \right| 
&\leq sC\int_Q\Theta ^{3/2} w^2 dxdadt
\\&
\le \frac{C}{4}s\int_{Q} \Theta  k (w_x)^2
dxdadt
\\&+ \frac{C^3}{4}s^3\int_{Q}\Theta^3
\frac{(x-x_0)^2}{k}w^2 dxdadt.
\end{aligned}
\end{equation}
Analogously, one has that the last term in \eqref{aaaaa}, i.e. $s\int_{Q} \Theta_{ta}\psi w^2 dxdadt$ satisfies
\[
\begin{aligned}
 \left|s\int_{Q} \Theta_{ta}\psi w^2 dxdadt\right| &\le  \frac{C}{4}s\int_{Q} \Theta  k (w_x)^2
dxdadt \\
&+   \frac{C^3}{4}s^3\int_{Q}\Theta^3
\frac{(x-x_0)^2}{k}w^2 dxdadt.
\end{aligned}
\]
Summing up, we obtain
\[
\begin{aligned}
\{D.T.\}&\ge -\frac{C}{4}s\int_{Q} \Theta  (w_x)^2 dxdadt -
\frac{C^3}{4}s^3\int_{Q}\Theta^3
\left(\frac{x-x_0}{k}\right)^2w^2 dxdadt \\
& -\frac{C^3}{4}s^3\int_{Q}\Theta^3 \left(\frac{x-x_0}{k}
\right)^2w^2 dxdadt
\\&
+ s C\int_{Q}\Theta (w_x)^2 dxdadt+ s^3C\int_{Q}\Theta^3\left(\frac{x-x_0}{k} \right)^2w^2 dxdadt\\&
-\frac{C}{4}s\int_{Q} \Theta  (w_x)^2 dxdadt -
\frac{C^3}{4}s^3\int_{Q}\Theta^3(w_x)^2 dxdadt
\\&
\ge \frac{C}{4}s\int_{Q} \Theta (w_x)^2 dxdadt +\frac{C^3}{4}s^3
\int_{Q}\Theta^3 \left(\frac{x-x_0}{k} \right)^2 w^2 dxdadt.
\end{aligned}
\]
\end{proof}

Proceeding as in \cite{f_anona} and in \cite{fm1}, one has for the boundary terms the following lemma:
\begin{lemma}\label{lemma41}
Assume Hypothesis $\ref{BAss01}$. The boundary terms in
\eqref{D&BT} reduce to
\[-sc_1\int_0^T\int_0^A\Theta(t)k\Big[(x-x_0)(w_x)^2\Big]_{x=0}^{x=1}dadt.
\]
\end{lemma}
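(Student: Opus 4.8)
The plan is to read off each group of boundary integrals in the \{B.T.\} block of Lemma \ref{lemma1} and to use the homogeneous conditions satisfied by $w$ in \eqref{1'}. First I would record that $w$ vanishes identically on every face of the box $Q$: on $\{x=0\}$ and $\{x=1\}$ (lateral conditions), on $\{t=0\}$ and $\{t=T\}$, and on $\{a=0\}$ and $\{a=A\}$. Since $w\in\mathcal V$ each of these traces is well defined, and differentiating a trace that is identically zero in the directions tangent to its face shows that the corresponding tangential derivatives of $w$ also vanish there; in particular $w_t=w_a=0$ on $\{x=0,1\}$ and $w_x=0$ on $\{a=0,A\}$.

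Next I would dispose of the terms one block at a time. The integrals $\int_{Q_{T,A}}[kw_xw_t]_0^1\,dadt$ and $\int_{Q_{T,A}}[kw_xw_a]_0^1\,dadt$ vanish because they carry the factors $w_t$, respectively $w_a$, evaluated on $\{x=0,1\}$, where these derivatives are zero. The term $-\frac{s}{2}\int_{Q_{A,1}}[\varphi_a w^2]_0^T\,dxda$ vanishes since $w=0$ on $\{t=0,T\}$. In the block $[-s\varphi_x(kw_x)^2+s^2k\varphi_t\varphi_x w^2-s^3k^2\varphi_x^3 w^2]_0^1$ the last two summands are proportional to $w^2$ and so vanish on $\{x=0,1\}$; likewise $[-sk(k\varphi_x)_x w w_x]_0^1$ and $s^2[k\varphi_x\varphi_a w^2]_0^1$ vanish because each carries a factor $w$. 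Finally, on the faces $\{a=0,A\}$ the integral $-\frac12\int_{Q_{T,1}}[kw_x^2]_0^A\,dxdt$ vanishes because $w_x=0$ there, while $\frac12\int_{Q_{T,1}}[(s^2k\varphi_x^2-s(\varphi_t+\varphi_a))w^2]_0^A\,dxdt$ vanishes because $w^2=0$ there.

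Hence the only surviving contribution is $-s\int_{Q_{T,A}}[\varphi_x(kw_x)^2]_0^1\,dadt$, and it remains to rewrite it. From \eqref{13} and \eqref{theta} one has $\varphi_x=\Theta\psi'=c_1\Theta\dfrac{x-x_0}{k}$, so that $\varphi_x(kw_x)^2=c_1\Theta\dfrac{x-x_0}{k}\,k^2 w_x^2=c_1\Theta k(x-x_0)w_x^2$. Substituting this yields exactly $-sc_1\int_0^T\int_0^A\Theta k\big[(x-x_0)(w_x)^2\big]_{x=0}^{x=1}\,dadt$, which is the asserted reduction.

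The only genuinely delicate point I expect is the justification of the trace manipulations: namely that for $w\in\mathcal V$ all these boundary integrals are finite and that the vanishing of $w$ on a face does force the vanishing of the tangential derivatives entering the computation. Here the location of the degeneracy helps rather than hinders, since $x_0$ lies in the interior of $(0,1)$: the function $k$ is bounded and bounded away from $0$ in neighborhoods of $x=0$ and $x=1$, so $k w_x^2$ is integrable on the lateral faces and the surviving term is finite, with the interior degeneracy playing no role in the boundary analysis. Once these cancellations are secured, the remaining bookkeeping is purely algebraic.
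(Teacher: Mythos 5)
Your term-by-term cancellation is correct and is precisely the argument the paper delegates to \cite{f_anona} and \cite{fm1}: every boundary integral except $-s\int_{Q_{T,A}}\left[\varphi_x (kw_x)^2\right]_0^1 dadt$ carries a factor of $w$, $w^2$, or a tangential derivative of $w$ on a face where $w$ vanishes identically, and $\varphi_x(kw_x)^2=c_1\Theta k(x-x_0)w_x^2$ gives the stated form. The one point worth making explicit is that on the faces $t=0$, $t=T$ and $a=0$ the factors $\varphi_t$, $\varphi_a$ (and $\varphi_x$, relevant for $w_x$ on $\{a=0\}$) blow up, so the vanishing of those products is not just ``$w=0$ there'' but the standard observation that $e^{2s\varphi}=e^{2s\Theta\psi}$ with $\psi<0$ decays faster than any power of $\Theta$ as $\Theta\to\infty$ — the same fact that makes $w$ vanish on those faces to begin with.
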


By Lemmas \ref{lemma1}-\ref{lemma41}, there exist $C>0$ and
$s_0>0$ such that all solutions $w$ of \eqref{1'} satisfy, for
all $s \ge s_0$,
\[
\begin{aligned}
& s\int_{Q}\Theta k w_x^2 dxdadt
+s^3\int_{Q}\Theta^3 \frac{(x-x_0)^2}{k}w^2 dxdadt \\
&\le C\left(\int_{Q} f^2 e^{2s\varphi}dxdadt+
sc_1\int_0^{T}\int_0^A \left[\Theta k(x)
(x-x_0)(w_x)^2\right]_{x=0}^{x=1}dadt \right).
\end{aligned}
\]

Hence, if $\mu \equiv 0$, Theorem \ref{Cor1} follows recalling the definition of $w$ and the fact that
\[
L^+_sw + L^-_sw=e^{s\varphi}f,
\]

If $\mu \not \equiv 0$, we consider the function $\overline{f}=f+\mu v$.
 Hence,  there are  two  strictly positive constants $C$ and $s_0$ such that, for
all $s\geq s_0$, the following inequality holds
\begin{equation} \label{fati1?}
\begin{aligned}
&\int_{Q} \left(s\Theta k (v_x)^2 + s^3 \Theta^3
\frac{(x-x_0)^2}{k}v^2\right)e^{2s\varphi}dxdadt\\
&\le C\left(\int_{Q} \bar f^{2}e^{2s\varphi}dxdadt +
s\int_0^T\int_0^A\left[k\Theta e^{2s\varphi}(x-x_0)(v_x)^2
dadt\right]_{x=0}^{x=1}dadt\right).
\end{aligned}
\end{equation}
On the other hand, we have
\begin{equation} \label{4'?}
\begin{aligned}
\int_{Q}\overline{f}^{2}e^{2s\varphi}\,dxdadt
\leq 2\Big(\int_{Q}|f|^{2}e^{2s\varphi}\,dxdadt
+\int_{Q}|\mu|^2|v|^{2}e^{2s\varphi}\,dxdadt\Big).
\end{aligned}
\end{equation}
Now, setting $\nu:=e^{s\varphi}v$,
we obtain
 \begin{equation}\label{sopra}
\begin{aligned}
 \int_{Q}|\mu|^2|v|^{2}e^{2s\varphi}\,dxdadt&\le  \|\mu\|_{\infty}^2\int_0^1
\nu^2 dx \\
&=  \|\mu\|_{\infty}^2\int_0^1\left(\frac{k^{1/3}}{|x-x_0|^{2/3}}\nu^2\right)^{3/4}\left(
\frac{|x-x_0|^2}{k} \nu^2\right)^{1/4} \\
&\le C \int_0^1
\frac{k^{1/3}}{|x-x_0|^{2/3}}\nu^2dx +C
\int_0^1 \frac{|x-x_0|^2}{k} \nu^2 dx.
\end{aligned}
\end{equation}
As in \eqref{quasfin}, proceeding as in \cite{fm1} and applying the Hardy-Poincar\'{e} inequality proved in \cite{fm} to the function $\nu$ with weight $p(x)= |x-x_0|^{4/3}$, if $K \le \displaystyle \frac{4}{3}$, or $p(x)  = (k(x)|x-x_0|^4)^{1/3}$, if $K>4/3$, we can prove that 

\begin{equation}\label{hpapplbis1}
\begin{aligned}
\int_0^1 \frac{ k^{1/3}}{|x-x_0|^{2/3}}\nu^2dx &\le C\int_0^1 k (\nu_x)^2 dx
\\
   &\le
    C\int_{Q} k(x) e^{2s\varphi}v_x^2dxdadt
 \\&  + Cs^2\int_{Q}\Theta^2 e^{2s\varphi}\frac{(x-x_0)^2}{k} v^2dxdadt.
\end{aligned}
\end{equation}
In any case, by \eqref{4'?}, \eqref{sopra} and \eqref{hpapplbis1}, we have
      \begin{equation}\label{fati2?}
      \begin{aligned}
      \int_{Q} |\bar{f}|^{2}\text{\small$~e^{2s\varphi}$\normalsize}~dxdadt
      &\le
      2\int_{Q} |f|^{2}\text{\small$~e^{2s\varphi}$\normalsize}~dxdadt
      +C\int_{Q}k(x) e^{2s\varphi} v_x^2 dxdadt
      \\&+ Cs^2\int_{Q} \Theta^2 e^{2s\varphi}\frac{(x-x_0)^2}{k} v^2dxdadt\\
      &\le  C\int_{Q} |f|^{2}\text{\small$~e^{2s\varphi}$\normalsize}~dxdadt
      +C\int_{Q}\Theta k(x) e^{2s\varphi} v_x^2 dxdadt
      \\&+ Cs^2\int_{Q} \Theta^3 e^{2s\varphi}\frac{(x-x_0)^2}{k} v^2dxdadt.
     \end{aligned} \end{equation}
   Substituting in
  \eqref{fati1?}, one can conclude
      \[
      \begin{aligned}
    &  \int_{Q}\left(s \Theta k v_x^2 +s^3\Theta^3\frac{(x-x_0)^2}{k} v^2\right)e^{2s\varphi}dxdadt
       \le
       C\Big(\int_{Q} |f|^{2}e^{2s\varphi}dxdadt
        \\[3pt]& 
         + s\int_0^T\int_0^A\left[k\Theta e^{2s\varphi}(x-x_0)(v_x)^2
dadt\right]_{x=0}^{x=1}dadt \Big),
        \end{aligned}
      \]
for all $s$ large enough.\\

\section{Observability and controllability}\label{osservabilita}
In this section we will prove, as a
consequence of the Carleman estimates established in Section 3, 
observability inequalities  for the associated  adjoint problem of \eqref{1}:
\begin{equation}\label{h=0}
\begin{cases}
\ds \frac{\partial v}{\partial t} + \frac{\partial v}{\partial a}
+(k(x)v_{x})_x-\mu(t, a, x)v +\beta(a,x)v(t,0,x)=0,& (t,x,a) \in  Q,
\\[5pt]
v(t,a,0)=v(t,a,1) =0, &(t,a) \in Q_{T,A},\\
  v(T,a,x) = v_T(a,x) \in L^2(Q_{A,1}), &(a,x) \in Q_{A,1} \\
  v(t,A,x)=0, & (t,x) \in Q_{T,1}.
\end{cases}
\end{equation}
 From now on, we assume that the control set $\omega$ is such that
\begin{equation}\label{omega0}
x_0 \in \omega =  (\alpha, \rho)  \subset\subset  (0,1),
\end{equation}
or
 \begin{equation}\label{omega_new}
 \omega = \omega_1 \cup
\omega_2,
\end{equation}
 where 
\begin{equation}\label{omega2}
\omega_i=(\lambda_i,\rho_i) \subset (0,1), \, i=1,2, \mbox{ and
$\rho_1 < x_0< \lambda_2$}.
\end{equation}

\begin{remark}\label{beta1}
Observe that, if \eqref{omega0} holds, we can find two subintervals
$\omega_1=(\lambda_1,\rho_1)\subset \subset(\alpha, x_0),
\omega_2=(\lambda_2,\rho_2) \subset\subset (x_0,\rho)$. 
\end{remark}

Moreover, on $\beta$ we assume the following assumption:
\begin{Assumptions}\label{conditionbeta} Suppose that there exists  
$\bar a <A$
 such that
\begin{equation}\label{conditionbeta1}
\beta(a, x)=0 \;  \text{for all $(a, x) \in [0, \bar a]\times [0,1]$}.
\end{equation}
\end{Assumptions} 

Observe that Hypothesis \ref{conditionbeta} has a biological meaning. Indeed, $\bar a$ is the minimal age in which the female of the population become fertile, thus it is natural that before $\bar a$ there are no newborns. For other comments on  Hypothesis \ref{conditionbeta} we refer to \cite{fJMPA}.

\vspace{0,4cm}
In order to prove the desired observability inequality 
for the solution $v$ of 
\eqref{h=0}
we proceed, as usual, using a density argument. To this purpose, we consider, first of all 
the space
\[
{\mathcal W}:=\Big\{ v\;\text{solution of \eqref{h=0}}\;\big|\;v_T \in
D(\mathcal A^2)\Big\}, \] where 
$D(\mathcal A^2) = \Big\{u \in
D(\mathcal A) \;\big|\; \mathcal A u \in
D(\mathcal A)\;\Big\}
$.  Clearly $D(\mathcal A^2)$ is densely
defined in $D({\mathcal A})$ (see, for example, \cite[Lemma 7.2]{b}) and
hence in $L^2(Q_{A,1})$ and 
\[\begin{aligned}
{\mathcal W}&= C^1\big([0,T]\:;D(\mathcal A)\big)\\&\subset \mathcal{V}:=L^2\big(Q_{T,A}; H^2_k(0,1)\big) \cap H^1\big(0,T; H^1(0,A;H^1_k(0,1))\big)
 \subset \mathcal{U}.
\end{aligned}\]

\begin{proposition}[Caccioppoli's inequality]\label{caccio} 
Let $\omega'$ and $\omega$ two open subintervals of $(0,1)$ such
that $\omega'\subset \subset \omega \subset  (0,1)$ and $x_0 \not
\in \bar\omega'$. Let $\psi(t,a,x):=\Theta(t,a)\Psi(x)$, where 
\begin{equation}\label{Theta}
\Theta(t, a)= \frac{1}{t^{4}(T-t)^{4}a^{4}}
\end{equation}
and $
\Psi \in C([0,1],(-\infty,0))\cap
C^1([0,1]\setminus\{x_0\},(-\infty,0))
$
is such that
\begin{equation}\label{stimayx}
|\Psi_x|\leq \frac{c}{\sqrt{k}} \mbox{ in }[0,1]\setminus\{x_0\}.
\end{equation}
Then, there exist two  strictly  positive constants $C$ and $s_0$ such that, for all $s \ge s_0$,
\begin{equation}\label{caccioeq}
\begin{aligned}
\int_{0}^T\int_0^A \int _{\omega'}   v_x^2e^{2s\psi } dxdadt
\ &\leq \ C\left( \int_{0}^T\int_0^A \int _{\omega}   v^2  dxdadt + \int_Q f^2 e^{2s\psi } dxdadt\right),
\end{aligned}
\end{equation}
for every
solution $v$ of \eqref{adjoint}.
\end{proposition}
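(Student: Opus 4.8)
The plan is to prove this Caccioppoli-type inequality via the standard cut-off function technique, which localizes the estimate on $\omega'$ by absorbing the $L^2$-norm of $v_x$ into an interior $L^2$-norm of $v$ on the larger set $\omega$. First I would introduce a smooth cut-off $\xi \in C^\infty_c(\omega)$ with $0 \le \xi \le 1$ and $\xi \equiv 1$ on $\omega'$; since $\omega' \subset\subset \omega$ and $x_0 \notin \bar\omega'$, we may arrange $\mathrm{supp}\,\xi \subset \omega$ while keeping $\xi = 1$ on a neighborhood of $\omega'$. Because $x_0 \notin \bar\omega'$ but $x_0$ could lie in $\omega$, I would actually take $\mathrm{supp}\,\xi$ to avoid $x_0$ entirely if convenient, or more safely note that on $\mathrm{supp}\,\xi$ away from $\omega'$ the coefficient $k$ is strictly positive and bounded below, so no degeneracy interferes. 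The key quantity to differentiate is $\int_Q \frac{d}{dt}\bigl(\xi^2 e^{2s\psi} v^2\bigr)\,dx\,da\,dt$, integrated over the full cylinder; because $\psi \to -\infty$ as $t \to 0^+, T^-$ and as $a \to 0^+$ (from the factor $\Theta$) and $v(t,A,x)=0$, the boundary contributions in $t$ and $a$ vanish.

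The central computation is to expand this time-and-age derivative using the equation \eqref{adjoint}. Writing $v_t + v_a = -(kv_x)_x + \mu v + f$, I would substitute and integrate by parts in $x$ the term involving $(kv_x)_x$, pushing one derivative onto $\xi^2 e^{2s\psi} v$. The boundary terms in $x$ vanish since $\xi$ is compactly supported in $\omega \subset (0,1)$. The crucial gain is the resulting term $\int_Q \xi^2 e^{2s\psi} k v_x^2\,dx\,da\,dt$, which carries the desired $v_x^2$ with a strictly positive coefficient (recall $k>0$ on $\mathrm{supp}\,\xi$ if we arrange $x_0 \notin \mathrm{supp}\,\xi$, or at worst $k \ge k_0 > 0$ on $\omega'$). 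All remaining terms are either: the $s$-derivative of $\psi$ times $v^2$, terms of the form $s\,\xi^2 e^{2s\psi} k \psi_x v v_x$, the cross term $\xi\xi_x e^{2s\psi} k v v_x$, the $\mu v^2$ term, and the $f v$ term. Each of these must be estimated so as to keep $v_x^2$ only on the good side and shunt everything else to $v^2$ and $f^2$.

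The main obstacle, and where I would spend the most care, is controlling the terms that contain one factor of $v_x$, namely $s\,\xi^2 e^{2s\psi} k \psi_x v v_x$ and $\xi\xi_x e^{2s\psi} k v v_x$, without losing the positivity of the principal $v_x^2$ term. The standard device is Young's inequality $|ab| \le \epsilon a^2 + \frac{1}{4\epsilon} b^2$ with $\epsilon$ small: apply it with $a = \sqrt{k}\,\xi\, v_x\, e^{s\psi}$ and $b$ the remaining factor, so that the $\epsilon a^2$ part is absorbed into $\frac{1}{2}\int_Q \xi^2 e^{2s\psi} k v_x^2$ (i.e., half the good term), leaving the $v^2$ contributions with coefficients that are polynomial in $s$. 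Here the hypothesis \eqref{stimayx}, $|\Psi_x| \le c/\sqrt{k}$, is exactly what is needed: it ensures $k\psi_x^2 = k\Theta^2\Psi_x^2 \le c^2 \Theta^2$ stays bounded by a power of $\Theta$, so the factor $s^2 k \psi_x^2 \xi^2 e^{2s\psi}$ multiplying $v^2$ is controlled uniformly in $x$ on $\mathrm{supp}\,\xi$; since $\mathrm{supp}\,\xi \subset \omega$, this $v^2$ integral is bounded by $C\int_0^T\int_0^A\int_\omega v^2$. The cross term $\xi\xi_x k v v_x$ is harmless because $\xi_x$ is supported in $\omega\setminus\omega'$ where $k$ is bounded, and again Young's inequality sends the $v^2$ part onto $\omega$. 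Finally the source term $\int_Q \xi^2 e^{2s\psi} f v\,dx\,da\,dt$ is split by Young's inequality into $\int_Q f^2 e^{2s\psi}$ plus a term $\int_Q \xi^2 e^{2s\psi} v^2$, the latter again dominated by the $\omega$-integral; note the exponential weight $e^{2s\psi} \le 1$ lets us bound the bare $v^2$ integrals over $\omega$ by the claimed $\int_0^T\int_0^A\int_\omega v^2$ without the weight. Collecting terms and absorbing, I obtain $\frac{1}{2}\int_Q \xi^2 e^{2s\psi} k v_x^2 \le C\bigl(\int_0^T\int_0^A\int_\omega v^2 + \int_Q f^2 e^{2s\psi}\bigr)$, and since $\xi \equiv 1$ and $k \ge k_0 > 0$ on $\omega'$, the left side dominates $\frac{k_0}{2}\int_0^T\int_0^A\int_{\omega'} v_x^2 e^{2s\psi}$, which yields \eqref{caccioeq}.
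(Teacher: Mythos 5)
Your argument is correct and is essentially the standard cut-off/energy proof that the paper itself invokes: the paper omits the details and refers to the identical arguments in \cite[Proposition 4.2]{f_anona} and \cite[Proposition 4.2]{fm}, which proceed exactly as you do (differentiate $\int \xi^2 e^{2s\psi}v^2$ in $t$ and $a$, use the equation, integrate by parts in $x$, and absorb the mixed terms by Young's inequality, with \eqref{stimayx} and the strict negativity of $\Psi$ ensuring the weights $s^j\Theta^j e^{2s\psi}$ stay bounded). No substantive gap.
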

The proof of the previous proposition is similar to the one given in \cite[Proposition 4.2]{f_anona} and \cite[Proposition 4.2]{fm}, so we omit it.

Moreover, the following non degenerate inequality proved in \cite{fJMPA} is crucial:
\begin{theorem}\label{nondegenere}[see \cite[Theorem 3.2]{fJMPA}]
Let $z\in 
\mathcal{Z}$ be the solution of
\eqref{adjoint},
where $f \in L^{2}(Q)$, $k \in C^{1}([0,1])$ is a strictly
positive function and
\[\mathcal{Z}:=L^2\big(Q_{T,A}; H^2(0,1)\cap H^1_0(0,1)\big) \cap H^1\big(0,T; H^1(0,A;H^1_0(0,1))\big).\]  Then, there exist two strictly positive constants $C$ and $s_0$,
such that, for any $s\geq s_0$, $z$ satisfies the estimate
\begin{equation} \label{570'}
\begin{aligned}
&\int_{Q}(s^{3}\phi^{3}z^{2}+s\phi z_{x}^{2})e^{2s\Phi} dxdadt \leq C \int_{Q}f^{2}e^{2s\Phi}dxdadt
  \\&-C
s\kappa\int_0^T\int_0^A\left[ke^{2s\Phi}\phi(z_x)^2
\right]_{x=0}^{x=1}dadt.
\end{aligned}
\end{equation}
Here the functions $\phi$ and $\Phi$ are
defined as follows
\begin{equation}\label{571}
\begin{gathered}
\phi(t,a,x)=\Theta(t,a)e^{\kappa\sigma(x)},\\
\Phi(a,t,x)=\Theta(t,a)\Psi(x), \quad
\Psi(x)=e^{\kappa\sigma(x)}-e^{2\kappa\|\sigma\|_{\infty}},
\end{gathered}
\end{equation}
where $(t,a,x)\in Q$, $\kappa>0$, $\sigma (x) :=\mathfrak{d}\int_x^1\frac{1}{k(t)}dt$,  $\fd=\|k'\|_{L^\infty(0,1)}$ and $\Theta$ is given in \eqref{Theta}.
\end{theorem}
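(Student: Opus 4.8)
}
Since $k$ is now strictly positive and $C^1$, the operator in \eqref{adjoint} is a uniformly parabolic operator in $x$ supplemented by the transport term in $a$, so the plan is to run the classical Fursikov--Imanuvilov scheme with the weight $\Phi=\Theta\Psi$ of \eqref{571}, exactly in the spirit of the degenerate proof of Theorem \ref{Cor1}. First I would reduce to the case $\mu\equiv0$ by setting $\bar f:=f+\mu z$, writing $Pz:=z_t+z_a+(kz_x)_x$ so that $Pz=\bar f$, and then conjugate: put $w:=e^{s\Phi}z$. With this choice $w$ vanishes on the lateral boundary $x\in\{0,1\}$ (Dirichlet condition) and at $a=A$ (terminal-age condition), while at $t\in\{0,T\}$ and at $a=0$ the factor $e^{s\Phi}$ forces $w$ to vanish because $\Theta\to+\infty$ there and $\Psi<0$. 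A direct computation gives $e^{s\Phi}P(e^{-s\Phi}w)=L_s^+w+L_s^-w=e^{s\Phi}\bar f$, with the same self-adjoint/skew-adjoint split as in the degenerate case,
\[
L_s^+w=(kw_x)_x-s(\Phi_t+\Phi_a)w+s^2k\Phi_x^2w,\qquad
L_s^-w=w_t+w_a-2sk\Phi_xw_x-s(k\Phi_x)_xw.
\]

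Next I would expand $\|e^{s\Phi}\bar f\|_{L^2(Q)}^2=\|L_s^+w\|^2+\|L_s^-w\|^2+2\langle L_s^+w,L_s^-w\rangle_{L^2(Q)}$ and integrate the cross product by parts in $t$, $a$ and $x$, producing distributed and boundary terms. Here the design of the weight does the work: since $\sigma'=-\fd/k$ and $\Psi_x=\kappa\sigma'e^{\kappa\sigma}$, one has $k\Phi_x^2=\fd^2\kappa^2\phi^2/k$, and the dominant distributed contribution behaves like $s^3\kappa^4\int_Q\phi^3w^2$ together with its companion $s\kappa^2\int_Q\phi w_x^2$, whereas every remainder carrying a factor $k'$ is bounded uniformly by $\fd=\|k'\|_{L^\infty(0,1)}$ and is therefore absorbed once $\kappa$ is taken large. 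The terms involving $\Theta_t,\Theta_a,\Theta_{tt},\Theta_{aa},\Theta_{ta}$ are of lower order in $s$ and are swallowed by $s^3\int_Q\phi^3w^2$ exactly as in \eqref{quasfin}. Since $w=0$ at $x\in\{0,1\}$, the $w^2$- and $ww_x$-boundary terms collapse there, and the only surviving contribution is the claimed $-Cs\kappa\int_0^T\int_0^A[ke^{2s\Phi}\phi(z_x)^2]_{x=0}^{x=1}\,dadt$; all other boundary terms vanish by the vanishing of $w$ at $a=A$, $t\in\{0,T\}$ and $a=0$.

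Having obtained $s^3\int_Q\phi^3w^2+s\int_Q\phi w_x^2\le C\int_Q|\bar f|^2e^{2s\Phi}+(\text{boundary})$, I would return to $z$ through the identities $ze^{s\Phi}=w$ and $z_xe^{s\Phi}=w_x-s\Phi_xw$. The second gives $\int_Q\phi w_x^2\ge\tfrac12\int_Q\phi z_x^2e^{2s\Phi}-Cs^2\int_Q\phi\Phi_x^2z^2e^{2s\Phi}$, and since $\phi\Phi_x^2=\kappa^2\fd^2\phi^3/k^2\lesssim\phi^3$ the error $s^2\int_Q\phi\Phi_x^2z^2e^{2s\Phi}$ is absorbed into $s^3\int_Q\phi^3z^2e^{2s\Phi}$ for $s$ large. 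The zero-order perturbation is handled by $\int_Q|\mu|^2z^2e^{2s\Phi}\le\|\mu\|_\infty^2\int_Qz^2e^{2s\Phi}\le C\|\mu\|_\infty^2\int_Q\phi^3z^2e^{2s\Phi}$, where I use that $\phi\ge\phi_{\min}>0$ on $Q$ (because $\Theta$ is bounded below and $\sigma\ge0$); this is again absorbed for $s$ large, yielding \eqref{570'}.

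The main obstacle is precisely the cross-term computation: one must track the exact dependence on $\kappa$ and verify that the single genuinely positive principal term $s^3\kappa^4\int_Q\phi^3w^2$ together with $s\kappa^2\int_Q\phi w_x^2$ strictly dominate every remainder generated by $k'$ and by the derivatives of $\Theta$. The choice $\fd=\|k'\|_{L^\infty(0,1)}$ in the definition of $\sigma$ is exactly what makes this domination uniform in $x$, but confirming it requires carrying all lower-order terms with their correct signs through each integration by parts.
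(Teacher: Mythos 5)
Your plan is the standard Fursikov--Imanuvilov scheme and is sound; note, though, that this paper does not actually prove Theorem \ref{nondegenere} --- it is imported from \cite[Theorem 3.2]{fJMPA} --- and the proof there (like that of the degenerate analogue, Theorem \ref{Cor1}, given in Section 3) follows exactly the conjugation $w=e^{s\Phi}z$, the $L_s^{+}/L_s^{-}$ splitting, and the $\kappa$-absorption of the $k'$-remainders that you describe. The only caveat is that what you have written is a programme rather than a proof: the cross-term expansion, the sign bookkeeping showing that $s^3\kappa^4\int_Q\phi^3w^2$ and $s\kappa^2\int_Q\phi w_x^2$ dominate, and the reduction of the boundary contributions to $-Cs\kappa\int_0^T\int_0^A\left[ke^{2s\Phi}\phi(z_x)^2\right]_{x=0}^{x=1}da\,dt$ constitute essentially the entire content of the argument and are left unexecuted.
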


\begin{remark}
The previous Theorem still holds under the weaker assumption $k \in W^{1, \infty}(0,1)$ without any additional assumption. 
\\
On the other hand, if we require $k \in W^{1,1}(0,1)$ then we have to add the following hypothesis:
{\it  there exist two functions $\fg \in L^1(0,1)$,
$\fh \in W^{1,\infty}(0,1)$ and two strictly positive constants
$\fg_0$, $\fh_0$ such that $\fg(x) \ge \fg_0$ and
\begin{equation}\label{debole}
-\frac{k'(x)}{2\sqrt{k(x)}}\left(\int_x^1\fg(t) dt + \fh_0 \right)+ \sqrt{k(x)}\fg(x) =\fh(x)\quad \text{for a.e.} \; x \in [0,1].
\end{equation}}
\\
In this case, i.e. if $k \in W^{1,1}(0,1)$,  the function $\Psi$ in \eqref{571} becomes
\begin{equation}\label{Psi_new}
\Psi(x):= - r\left[\int_0^x
\frac{1}{\sqrt{k(t)}} \int_t^1
\fg(s) dsdt + \int_0^x \frac{\fh_0}{\sqrt{k(t)}}dt\right] -\mathfrak{c}, 
\end{equation}
where $r$ and $\mathfrak{c}$ are suitable strictly positive functions. For other comments on Theorem \ref{nondegenere} we refer to \cite{fJMPA}.
\end{remark}
In the following, we will apply
 Theorem \ref{nondegenere} in the  intervals $[\lambda_2, 1]$ and $[-\rho_1, \rho_1]$ under these weaker assumptions. In particular, on $k$ we assume:

\begin{Assumptions}\label{ipogenerale}
The function $k$ satisfies Hypothesis $\ref{BAss01}$. Moreover, if  $k \in W^{1,1}(0,1)$, then
there exist
two functions $\fg \in L^\infty_{\rm loc}([-\rho_1,1]\setminus \{x_0\})$, $\fh \in W^{1,\infty}_{\rm loc}([-\rho_1,1]\setminus \{x_0\}, L^\infty(0,1))$ and
two strictly positive constants $\fg_0$, $\fh_0$ such that $\fg(x) \ge \fg_0$  and
\begin{equation}\label{aggiuntivastrana}
-\frac{\tilde k'(x)}{2\sqrt{\tilde k(x)}}\left(\int_x^B\fg(t) dt + \fh_0 \right)+ \sqrt{\tilde k(x)}\fg(x) =\fh(x,B)
\end{equation}
for a.e. $x \in [-\rho_1,1], B \in [0,1]$ with $x<B<x_0$ or $x_0<x<B$, where
\begin{equation}\label{tildek}
\tilde k(x):= \begin{cases}k(x), & x \in [0,1],\\
k(-x), & x \in [-1,0].
\end{cases}
\end{equation}
\end{Assumptions}
With the aid of Theorems \ref{Cor1},  \ref{nondegenere} and Proposition \ref{caccio}, we can now show $\omega-$local Carleman estimates  for \eqref{adjoint}.

\begin{theorem}\label{Cor2} Assume Hypothesis \ref{ipogenerale}. Then,
there exist two  strictly positive constants $C$ and $s_0$ such that every
solution $v$ of \eqref{adjoint} in
$
\mathcal {V}
$ 
satisfies, for all $s \ge s_0$,
\[
\begin{aligned}
\int_{Q}\left(s \Theta k  v_x^2
                + s^3\Theta^3\text{\small$\displaystyle \frac{(x-x_0)^2}{k}$\normalsize}
                  v^2\right)e^{2s\varphi}dxdadt
&\le
C\int_{Q}f^{2}\text{\small$e^{2s\Phi}$\normalsize}~dxdadt\\
&+C \int_0^T \int_0^A\int_ \omega v^2 dx dadt.
\end{aligned}\]
\end{theorem}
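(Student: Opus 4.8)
The plan is to localize the estimate by covering $(0,1)$ with one central subinterval containing the degeneracy point $x_0$, on which the degenerate estimate of Theorem \ref{Cor1} applies, and two lateral subintervals on which $k$ is nondegenerate, on which the nondegenerate estimate of Theorem \ref{nondegenere} applies, and then to glue the three pieces together. By Remark \ref{beta1} (in case \eqref{omega0}) or by \eqref{omega2} (in case \eqref{omega_new}) I fix $\omega_1=(\lambda_1,\rho_1)$ and $\omega_2=(\lambda_2,\rho_2)$ with $\rho_1<x_0<\lambda_2$, and I choose a cutoff $\xi\in C^\infty([0,1])$ with $0\le\xi\le1$, $\xi\equiv1$ on $[\rho_1,\lambda_2]$ and $\xi\equiv0$ outside $[\lambda_1,\rho_2]$, so that $\mathrm{supp}\,\xi'\subset\omega_1\cup\omega_2$ stays away from $x_0$.

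First I would apply Theorem \ref{Cor1} to $\xi v$. Since $v$ solves \eqref{adjoint}, the product $\xi v$ solves the same equation with right-hand side $\tilde f:=\xi f+2k\xi' v_x+(k\xi')_x v$, and, because $\xi$ together with $\xi'$ vanishes near $x=0$ and $x=1$, the boundary terms in Theorem \ref{Cor1} drop out. This yields
\[
\int_Q\Big(s\Theta k((\xi v)_x)^2+s^3\Theta^3\frac{(x-x_0)^2}{k}(\xi v)^2\Big)e^{2s\varphi}dxdadt\le C\int_Q\tilde f^{\,2}e^{2s\varphi}dxdadt.
\]
On $\mathrm{supp}\,\xi'\subset\omega_1\cup\omega_2$ the coefficient $k$ is bounded below, so the commutator part of $\tilde f$ is controlled by $C\int_0^T\int_0^A\int_{\omega_1\cup\omega_2}(v^2+v_x^2)e^{2s\varphi}dxdadt$; since $\xi\equiv1$ on the neighborhood $[\rho_1,\lambda_2]$ of $x_0$, the left-hand side dominates the desired weighted integral over that central region.

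Next, on the lateral regions I would invoke Theorem \ref{nondegenere}, which applies because $k$ is strictly positive there. On $[\lambda_2,1]$ one uses it directly together with $v(t,a,1)=0$, while on the left I would use the reflection \eqref{tildek}: extending $v$ and the coefficient to $[-\rho_1,\rho_1]$ turns the Dirichlet point $x=0$ into an interior point and makes $\tilde k$ nondegenerate on $[-\rho_1,\rho_1]$, so that Theorem \ref{nondegenere}, under the form guaranteed by Hypothesis \ref{ipogenerale}, provides a nondegenerate Carleman estimate there. A further cutoff localizing in these intervals creates commutator terms supported in $\omega_1\cup\omega_2$ and boundary contributions at the matching points $\rho_1$ and $\lambda_2$, which, after reflection, appear in symmetric pairs and must be absorbed. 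The essential technical point is that on the lateral regions the degenerate weight $\varphi$ and the nondegenerate weights $\phi,\Phi$ are all of the form $\Theta$ times a negative bounded function of $x$, hence mutually comparable up to multiplicative constants once the free constants in $\psi$, $\Psi$ and in the Caccioppoli weight are ordered suitably; this comparison is what lets me pass between $e^{2s\varphi}$ and $e^{2s\Phi}$ at the price of harmless constants when summing the three estimates to recover the full left-hand side over all of $(0,1)$ with the source term written against $e^{2s\Phi}$.

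Adding the central and lateral estimates produces the complete left-hand side of Theorem \ref{Cor2}, with a right-hand side consisting of $C\int_Q f^2e^{2s\Phi}$ plus localized first-order terms of the form $C\int_0^T\int_0^A\int_{\omega_1\cup\omega_2}(v^2+v_x^2)e^{2s\varphi}dxdadt$. To reach the stated inequality it remains to eliminate the gradient contribution: here I would apply Caccioppoli's inequality (Proposition \ref{caccio}) on subintervals $\omega_i'\subset\subset\omega_i$ with $x_0\notin\overline{\omega_i'}$, after arranging the weights so that $e^{2s\varphi}$ is dominated by the Caccioppoli weight on $\omega_1\cup\omega_2$, thereby bounding the local gradient integral by $C\int_0^T\int_0^A\int_\omega v^2\,dxdadt+C\int_Q f^2e^{2s\Phi}dxdadt$ and leaving only the clean observation term $C\int_0^T\int_0^A\int_\omega v^2$ and the source. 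The main obstacle is precisely this gluing step: reconciling the two genuinely different Carleman weights on the overlaps, absorbing the interface boundary terms created by the cutoffs, and setting up the reflection on the left so that the nondegenerate estimate is usable up to $x=0$; once the weight orderings and the Caccioppoli absorption are in place, the summation is routine.
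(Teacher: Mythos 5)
Your proposal is correct and follows essentially the same route as the paper's proof: a cutoff around $x_0$ combined with Theorem \ref{Cor1} (whose boundary terms vanish since the cutoff is compactly supported), Theorem \ref{nondegenere} on the nondegenerate lateral pieces with the odd reflection \eqref{tildek} across $x=0$ on the left, the pointwise comparison of the weights $e^{2s\varphi}$ and $e^{2s\Phi}$ on the lateral regions, and Caccioppoli-type inequalities to absorb the localized gradient terms into $\int_0^T\int_0^A\int_\omega v^2$. The only differences from the paper are cosmetic (the precise support of the cutoffs, and the fact that the interface boundary terms actually vanish because the cutoffs are supported strictly inside the subintervals rather than needing to be absorbed).
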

\begin{proof} First assume that $\omega$ satisfies \eqref{omega0} and take $w_i$, $i=1,2$, as in Remark \ref{beta1}. Now, fix $\bar \lambda_i, \bar \rho_i  \in \omega_i=(\lambda_i, \rho_i)$, $i=1,2$, such that $\bar \lambda_ i <  \bar \rho_i$ and  consider a smooth
function $\xi:[0,1]\to[0,1]$ such that
\[
\xi(x)=\begin{cases} 0&x\in [0,\bar\lambda_1],\\
1 & x\in[\tilde \lambda_1,\tilde \lambda_2],\\
0&x\in [\bar\rho_2,1],
\end{cases}
\]
where $\tilde \lambda_i=(\bar \lambda_i+\bar \rho_i)/2$, $i=1,2$. 
Define $w:= \xi v$, where $v$ is any fixed solution of
\eqref{adjoint}. 
    Then $w$   satisfies
    \[
    \begin{cases}
      w_t +w_a+(k  w_{x})_x- \mu w= \xi f + (k\xi_xv)_x+\xi_xkv_x=:h,&
      (t,a, x) \in Q,
      \\[5pt]
  w(t,a,0)=  w(t,a,1)=0, & (t,a) \in Q_{T,A}.
    \end{cases}
    \]
    Thus, applying Theorem \ref{Cor1}, Proposition \ref{caccio}, and proceeding as in \cite{f_anona}, we have
    \begin{equation}\label{add1}
    \begin{aligned}
&\int_0^T\int_0^A \int_{\tilde \lambda_1}^{\tilde \lambda_2}\left(s \Theta k v_x^2
                + s^3\Theta^3\text{\small$\displaystyle\frac{(x-x_0)^2}{k}$\normalsize}
                  v^2\right)e^{2s\varphi}dxdadt \\
                  &=\int_0^T\int_0^A \int_{\tilde \lambda_1}^{\tilde \lambda_2}\left(s \Theta k w_x^2
                + s^3\Theta^3\text{\small$\displaystyle\frac{x^2}{k}$\normalsize}
                  w^2\right)e^{2s\varphi}dxdadt\\
&
\le C \left( \int_{Q} f^2e^{2s\varphi} dxdadt+ \int_{0}^T\int_0^A \int _{\omega} v^2 dxdadt \right).
\end{aligned}
\end{equation}
Now, consider a smooth function $\eta: [0,1] \to [0,1]$ such that
\[
\eta(x) =\begin{cases} 0& x\in [0,\bar\lambda_2],\\
1& x\in [\tilde\lambda_2, 1],
\end{cases}
\]
and define $z:= \eta v$. Then $z$ satisfies
    \begin{equation}\label{problemz}
    \begin{cases}
      z_t +z_a+(k z_{x})_x- \mu z= \eta f +(k\eta_xv)_x+\eta_xkv_x=:h,&
      (t,a, x) \in Q_{T,A}\times (\lambda_2,1),\\
z(t,a, \lambda_2)= z(t,a, 1)=0, & t \in Q_{T,A},
\end{cases}
\end{equation}
    Clearly the equation satisfied by $z$ is not degenerate, thus applying Theorem \ref{nondegenere} and \cite[Lemma 4.1]{fm_opuscola} on $(\lambda_2,1)$, one has
    \[
    \begin{aligned}
&\int_0^T\int_0^A\int_{\lambda_2}^1(s^{3}\phi^{3}z^{2}+s\phi z_{x}^{2})e^{2s\Phi} dxdadt \leq C \int_0^T\int_0^A\int_{\lambda_2}^1h^{2}e^{2s\Phi}dxdadt
\\
 &\le C \left( \int_Qf^{2}e^{2s\Phi}dxdadt+  \int_0^T\int_0^A \int_{\omega}v^2dxdadt\right).
\end{aligned}
    \]
    Hence
    \[ 
      \begin{aligned}
  &  \int_0^T\int_0^A\int_{\tilde\lambda_2}^1   (s^{3}\phi^{3}v^{2}+s\phi v_{x}^{2})e^{2s\Phi} dxdadt=
 \int_0^T\int_0^A\int_{\tilde\lambda_2}^1 (s^{3}\phi^{3}z^{2}+s\phi z_{x}^{2})e^{2s\Phi} dxdadt\\
 &\le C \left( \int_Qf^{2}e^{2s\Phi}dxdadt+  \int_0^T\int_0^A \int_{\omega} v^2dxdadt\right),
 \end{aligned}
    \]
    for a  strictly positive constant $C$.
Proceeding, for example, as in \cite{fm1}  one can prove the existence of $\varsigma>0$, such that,  for
all $(t,a,x)\in [0,T]\times[0,A]\times[\lambda_2,1]$, we have
\begin{equation}\label{stimaphi}
e^{2s\varphi}\leq\varsigma e^{2s\Phi},
\frac{(x-x_0)^2}{k(x)}e^{2s\varphi}\leq\varsigma e^{2s\Phi}.
\end{equation}
Thus, for a  strictly  positive constant $C$,
   \begin{equation}\label{add2}
    \begin{aligned}
&\int_0^T\int_0^A \int_{\tilde\lambda_2}^1  \left(s \Theta k v_x^2
                + s^3\Theta^3\frac{(x-x_0)^2}{k}
                  v^2\right)e^{2s\varphi}dxdadt \\
                  &\le C\left( \int_0^T\int_0^A \int_{\tilde\lambda_2}^1  (s^{3}\phi^{3}v^{2}+s\phi v_{x}^{2})e^{2s\Phi} dxdadt\right)\\
                 &
\le C \left( \int_{Q} f^2 e^{2s\Phi} dxdadt+ \int_{0}^T\int_0^A \int _{\omega}  v^2  dxdadt \right).
\end{aligned}
\end{equation}
Hence,
\begin{equation}\label{stimavec}
  \begin{aligned}
&\int_0^T\int_0^A \int_{\tilde\lambda_1}^1  \left(s \Theta k v_x^2
                + s^3\Theta^3\frac{(x-x_0)^2}{k}
                  v^2\right)e^{2s\varphi}dxdadt \\
                  &
\le C \left( \int_{Q} f^2 e^{2s\Phi} dxdadt+ \int_{0}^T\int_0^A \int _{\omega}  v^2  dxdadt \right).
\end{aligned}
\end{equation}
To complete the
proof it is sufficient to prove a similar inequality for
$x\in[0,\tilde\lambda_1]$. To this aim, we use the reflection procedure as in
 \cite{fJMPA}; thus we consider the functions
\[
W(t,a,x):= \begin{cases} v(t,a,x), & x \in [0,1],\\
-v(t,a,-x), & x \in [-1,0],
\end{cases}
\]
\[
\tilde f(t,a,x):= \begin{cases} f(t,a,x), & x \in [0,1],\\
-f(t,a,-x), & x \in [-1,0],
\end{cases}
\]
\[
\tilde \mu(t,a,x):= \begin{cases} \mu(t,a,x), & x \in [0,1],\\
\mu(t,a,-x), & x \in [-1,0],
\end{cases}
\]
so that $W$ satisfies the problem
\[
\begin{cases}
W_t +W_a +(\tilde k W_{x})_x - \tilde \mu W= \tilde f, &(t,x) \in  Q_{T,A}\times (-1,1),\\
W(t,a,-1)=W(t,a,1) =0, & t \in Q_{T,A},
\end{cases}
\]
(by the way, observe that in \cite{fJMPA} there is a misprint in  the definition of $\mu$; it clearly must be defined in this way, otherwise $W$ is not the solution of the associated problem).
Now, consider a cut off function $\zeta: [-1,1] \to  [0,1]$ such that
\[\zeta (x) =\begin{cases} 0 & x\in[-1,-\bar \rho_1],\\
1& x\in [-\tilde\lambda_1, \tilde\lambda_1],\\
0&x\in [\bar \rho_1,1],
\end{cases}
\]
and define $Z:=\zeta W$. Then $Z$ satisfies
\begin{equation}\label{eq-Z*}
\begin{cases}
Z_t + Z_a+ (\tilde kZ_{x})_x -\tilde \mu Z=\tilde h,  &(t,x) \in Q_{T,A}\times (-\rho_1,\rho_1),\\
Z(t,a,-\rho_1)= Z(t,a,\rho_1)=0, & t \in Q_{T,A},
\end{cases}
\end{equation}
where $\tilde h=\zeta \tilde f+ (\tilde k\zeta_xW)_x+\zeta_x\tilde kW_x$.
Now, applying 
 the analogue of Theorem \ref{nondegenere} on $(- \rho_1,
\rho_1)$ in place of $(0,1)$, using the definition of $W$, the fact that $Z_x(t, a,-\rho_1)=Z_x(t,a,
\rho_1)=0$  and since $\zeta$ is
supported in $\left[-\bar \rho_1, -\tilde \lambda_1\right] \cup\left[\tilde\lambda_1, \bar \rho_1\right]$, we get
\[
\begin{aligned}
& \int_0^T\int_0^A\int_{0}^{\tilde \lambda_1}  \left(s\Theta k (W_x)^2 + s^3
\Theta^3
\frac{(x-x_0)^2}{k} W^2\right)e^{2s\varphi}dxdadt\\
&= \int_0^T\int_0^A\int_{0}^{\tilde \lambda_1}  \left(s\Theta k (Z_x)^2 + s^3
\Theta^3
\frac{(x-x_0)^2}{k} Z^2\right)e^{2s\varphi}dxdadt\\
&\le C \int_0^T\int_0^A\int_{0}^{\rho_1}\left(s\Theta (Z_x)^2 + s^3 \Theta^3
Z^2\right)e^{2s\Phi}dxdadt\\
&\le C \int_0^T\int_0^A\int_{-\rho_1}^{\rho_1}\left(s\Theta (Z_x)^2 + s^3 \Theta^3
Z^2\right)e^{2s\Phi}dxdadt
\end{aligned}
\]
\[
\begin{aligned}
& \le C
\int_0^T\int_0^A\int_{-\rho_1}^{\rho_1} \tilde h^{2}e^{2s\Phi}dxdadt\le C \int_0^T\int_0^A\int_{-\rho_1}^{\rho_1} \tilde f^{2}e^{2s\Phi}dxdadt \\
&+ C \int_0^T\int_0^A \int_{-\bar \rho_1}^{-\tilde\lambda_1}(
W^2+ (W_x)^2)e^{2s\Phi}dxdadt \\
&+ C\int_0^T\int_0^A \int_{\tilde \lambda_1}^{ \bar \rho_1}(W^2+ (W_x)^2)e^{2s\Phi}dxdadt\\
&\le C \int_0^T\int_0^A\int_{-\rho_1}^{\rho_1} \tilde f^{2}dxdadt + C\int_0^T\int_0^A \int_{-\rho_1}^{- \lambda_1}W^2dxdadt \\
&+C\int_0^T\int_0^A\int_{\lambda_1}^{ \rho_1} W^2dxdadt \\
& \mbox{ (by \cite[Lemma 4.1]{fm_opuscola} and since $\tilde f(t,a,x)= -f(t,a,-x)$, for $x <0$) }\\
& \le C \int_0^T\int_0^A\int_0^1 f^2 dxdadt +C\int_0^T\int_0^A\int_\omega v^2 dxdadt, 
\end{aligned}
\]
for some  strictly positive constants $C$ and $s$ large enough. Here  $\Phi$ is related to $  (-\rho_1,\rho_1)$.

 Hence, by definitions of $Z$, $W$ and $\zeta$, and using the previous inequality one has
\begin{equation}\label{car101}
\begin{aligned}
&\int_0^T\int_0^A\int_{0}^{\tilde\lambda_1} \left(s\Theta k (v_x)^2 + s^3 \Theta^3
\frac{(x-x_0)^2}{k} v^2\right)e^{2s\varphi}dxdadt\\
&= \int_0^T\int_0^A\int_{0}^{\tilde\lambda_1} \left(s\Theta k(W_x)^2 + s^3 \Theta^3
\frac{(x-x_0)^2}{k}W^2\right)e^{2s\varphi}dxdadt\\
&\le C\left( \int_Q f^{2}dxdadt + \int_0^T\int_0^A \int_{\omega}v^2dxdadt\right).
\end{aligned}
\end{equation}
Moreover, by \eqref{stimavec} and
\eqref{car101}, the conclusion follows.

Nothing changes in the proof if $\omega= \omega_1\cup \omega_2$ and each of these intervals lye on different sides of $x_0$, as the assumption implies.
\end{proof}
\begin{remark}\label{remarkultimo}
Observe that the results of Theorem \ref{Cor2}  still holds true if we substitute the domain $(0,T)\times (0,A)$ with a general domain $(T_1,T_2)\times (\delta,A)$, provided that $\mu$ and $\beta$ satisfy the required assumptions. In this case, in place of the function $\Theta$ defined in \eqref{Theta}, we have to consider the weight function
\[
\tilde \Theta(t,a):= \frac{1}{(t-T_1)^4 (T_2-t)^4(a-\delta)^4}.
\]
\end{remark}
Using the previous local Carleman estimates one can prove the next observability inequalities.

\begin{theorem}\label{Theorem4.4} Assume Hypotheses $\ref{conditionbeta}$, with $ \bar a<T \le A$, and $\ref{ipogenerale}$. Then,  for every $\delta \in (0,A)$,
there exists a  strictly positive constant $C=C(\delta)$  such that every
solution $v$ of \eqref{h=0} in
$\mathcal V$
satisfies
\begin{equation}\label{T<A}
\begin{aligned}
 &\int_0^A\int_0^1 v^2( T-\bar a,a,x) dxda  \le C\int_0^{T} \int_0^\delta \int_0^1v^2(t,a,x) dxdadt\\
 &+ C\left( \int_0^{T}\int_0^1 v_T^2(a,x)dxda+ \int_0^T \int_0^A\int_ \omega v^2 dx dadt\right).
\end{aligned}\end{equation}
Moreover, if $v_T(a,x)=0$ for all $(a,x) \in (0,  T) \times (0,1)$, one has
\begin{equation}\label{T<A1}
\begin{aligned}
 \int_0^A\!\!\int_0^1 v^2(T -\bar a,a,x) dxda  &\le C\int_0^{T} \int_0^\delta\!\! \int_0^1v^2(t,a,x) dxdadt \\&+C\int_0^T\!\! \int_0^A\!\!\int_ \omega v^2 dx dadt.
\end{aligned}
\end{equation}
\end{theorem}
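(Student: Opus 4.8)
The plan is to obtain \eqref{T<A} by combining the $\omega$-local Carleman estimate of Theorem \ref{Cor2} with elementary energy estimates along the characteristics of $\partial_t+\partial_a$, handling the nonlocal birth term as a source. First I would view a solution $v$ of \eqref{h=0} as a solution of \eqref{adjoint} with $f:=-\beta(a,x)\,v(t,0,x)$ and apply Theorem \ref{Cor2}, in the shifted-age form of Remark \ref{remarkultimo} on $(0,T)\times(\delta,A)$, so that the weight is regular at $a=\delta$. This bounds $\int_0^T\!\int_\delta^A\!\int_0^1\big(s\tilde\Theta k v_x^2+s^3\tilde\Theta^3\frac{(x-x_0)^2}{k}v^2\big)e^{2s\varphi}$ by $C\int_Q\beta^2 v^2(t,0,x)e^{2s\Phi}+C\int_0^T\!\int_0^A\!\int_\omega v^2$. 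On any cylinder $(t_1,t_2)\times(\delta,A)\times(0,1)$ with $0<t_1<t_2<T$ the weight $e^{2s\varphi}$ is bounded below, and passing from the weighted integral to $\int v^2$ near the degeneracy is done with the Hardy--Poincar\'e inequality of \cite{fm} and the $kv_x^2$-term; thus Theorem \ref{Cor2} controls $\int_{t_1}^{t_2}\!\int_\delta^A\!\int_0^1(v^2+kv_x^2)$ by the birth source and the observation on $\omega$. The ages below $\delta$, where the weight degenerates as $a\to0$, are precisely those that will be absorbed into the term $\int_0^T\int_0^\delta\int_0^1 v^2$ on the right-hand side of \eqref{T<A}.

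Second, I would control the age-zero trace. Writing $p(t,a):=\int_0^1 v^2(t,a,x)\,dx$, multiplying the equation by $v$ and integrating in $x$ gives, by the Dirichlet conditions, the transport identity $\tfrac12(\partial_t+\partial_a)p=\int_0^1 kv_x^2\,dx+\int_0^1\mu v^2\,dx-\int_0^1\beta\,v(t,0,x)\,v\,dx$. Integrating it along the characteristic issued from $(t,0)$, which reaches $(T,T-t)$ because $T\le A$, and then integrating in $t\in(0,T)$, I would bound $\int_0^T p(t,0)\,dt$ by $\int_0^T\int_0^1 v_T^2(a,x)\,dx\,da$ plus cross terms that are reabsorbed by Young's inequality and the dissipation already controlled above. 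Since $\beta\equiv0$ on $[0,\bar a]$ by Hypothesis \ref{conditionbeta}, integration in $a$ and the bound $\int_{\bar a}^A\beta^2 e^{2s\Phi}\,da\le C$ reduce the source to a bounded multiple of the newborn trace, so that $\int_Q\beta^2 v^2(t,0,x)e^{2s\Phi}\le C\int_0^T p(t,0)\,dt$ is closed against $\int_0^T\int_0^1 v_T^2$ and the observation.

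Third, I would recover the slice $t=T-\bar a$. Integrating the same transport identity along characteristics from an interior time $t<T-\bar a$ up to $T-\bar a$, and then averaging in $t$ over a short subinterval of $(t_1,T-\bar a)$, I would estimate $\int_0^A p(T-\bar a,a)\,da$ by the interior space--time integral of $v^2$ and of $kv_x^2$ (both controlled in the first step), by the birth cross term (controlled in the second step), and, for the characteristics that meet the boundary $a=0$, by $\int_0^T\int_0^\delta\int_0^1 v^2$. Collecting the three groups of terms and fixing $s$ large enough to absorb every cross term yields \eqref{T<A}; the refinement \eqref{T<A1} is immediate, since assuming $v_T=0$ on $(0,T)\times(0,1)$ annihilates the terminal-data contribution identified in the second step.

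The principal obstacle is the nonlocal newborn term $\beta(a,x)v(t,0,x)$: because the age-zero trace is not prescribed by \eqref{h=0}, it can only be controlled a posteriori, through the characteristic energy identity, and its cross terms must be reabsorbed simultaneously with the Carleman absorption, which demands careful tracking of the powers of $s$ and of the behaviour near $t=0,T$. A secondary difficulty is the mismatch between the age range where the Carleman weight is nondegenerate and the need to estimate the slice at all ages; this is exactly what makes the auxiliary small-age term $\int_0^\delta$ unavoidable and requires gluing the energy estimates across $a=0$.
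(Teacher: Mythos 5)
Your first and third steps track the paper's argument: the paper also splits the age integral at $\delta$, converts $\int_0^1 v^2\,dx$ into the Carleman-weighted quantities $\int k v_x^2$ and $\int \frac{(x-x_0)^2}{k}v^2$ via Young's inequality and the Hardy--Poincar\'e inequality of \cite{fm} (this is \eqref{terminenuovo11}--\eqref{ribo1}), applies the $\omega$-local estimate of Theorem \ref{Cor2} with $f=-\beta(a,x)v(t,0,x)$, and propagates the slice $t=T-\bar a$ to a time-averaged integral over a compact subinterval (inequality \eqref{t=01}). The genuine divergence -- and the gap -- is in your second step, the control of the newborn trace $v(t,0,\cdot)$.

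The paper does \emph{not} estimate $\int_0^T p(t,0)\,dt$ by an energy/Gronwall argument; it uses the method of characteristics to derive the mild-solution representation \eqref{implicitformula}--\eqref{v(0)}, which, thanks to Hypothesis \ref{conditionbeta} ($\beta\equiv 0$ for $a\le\bar a$), gives $v(t,0,\cdot)=S(T-t)v_T(T-t,\cdot)$ with $S$ a bounded semigroup, hence $\int_0^T\!\!\int_0^1 v^2(t,0,x)\,dx\,dt\le C\int_0^{T}\!\!\int_0^1 v_T^2(a,x)\,dx\,da$ directly. Your substitute -- integrating the identity $\tfrac12(\partial_t+\partial_a)p=\int_0^1 kv_x^2+\int_0^1\mu v^2-\int_0^1\beta v(t,0,x)v\,dx$ along the characteristic from $(t,0)$ to $(T,T-t)$ and reabsorbing the cross term by Young -- does not close. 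After Cauchy--Schwarz and Young, the birth cross term produces, besides an $\epsilon\int_0^T p(\tau,0)\,d\tau$ that you can absorb, a term of the form $C_\epsilon\int_0^T\!\!\int_{\bar a}^{A}\int_0^1 v^2\,dx\,da\,dt$ over the \emph{full} time interval $(0,T)$. This quantity is not controlled by anything on the right-hand side of \eqref{T<A}: the Carleman estimate of Theorem \ref{Cor2} only yields information where $e^{2s\varphi}$ is bounded below, i.e.\ on compact subintervals of $(0,T)$ (the weight vanishes as $t\to 0^+$ and $t\to T^-$), and bounding $\int_0^T\!\!\int_{\bar a}^A\int_0^1 v^2$ by the observation on $\omega$ plus $v_T$ is essentially the observability statement you are trying to prove. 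So the step is circular as written. To repair it you would need either the explicit representation of $v(t,0,\cdot)$ in terms of $v_T$ (the paper's route, which is what makes the term $\int_0^{T}\int_0^1 v_T^2$ appear with the age range $(0,T)$), or a genuine backward-in-time induction in steps of $\bar a$ exploiting that the Duhamel term only involves $v(\tau,0,\cdot)$ for $\tau\ge t+\bar a$; a one-shot Young absorption is not enough.
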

Observe that in \cite[Theorem 4.4]{fJMPA}, which is the analogue of Theorem \ref{Theorem4.4} in the non divergence case, there is a mistake in the statement. Indeed, we assumed $\ds \frac{k'}{\sqrt{k}} \in L^\infty_{\text{loc}}([0,1]\setminus\{x_0\})$, which was a consequence of \eqref{aggiuntivastrana} below (see the remark after $(46)$ in \cite{fJMPA}); the precise assumption is:
\\
{\it there exist
two functions $\fg \in L^\infty_{\rm loc}([-\rho_1,1]\setminus \{x_0\})$, $\fh \in W^{1,\infty}_{\rm loc}([-\rho_1,1]\setminus \{x_0\}, L^\infty(0,1))$ and
two strictly positive constants $\fg_0$, $\fh_0$ such that $\fg(x) \ge \fg_0$  and
\begin{equation}\label{aggiuntivastrana1}
\frac{\tilde k'(x)}{2\sqrt{\tilde k(x)}}\left(\int_x^B\fg(t) dt + \fh_0 \right)+ \sqrt{\tilde k(x)}\fg(x) =\fh(x,B)
\end{equation}
for a.e. $x \in [-\rho_1,1], B \in [0,1]$ with $x<B<x_0$ or $x_0<x<B$, where $\tilde k$ is defined in \eqref{tildek}.
} Indeed, in order to prove \cite[Theorem 4.4]{fJMPA}, we use \cite[Theorem 4.3]{fJMPA} which  holds under \eqref{aggiuntivastrana1}. On the other hand, the statement of \cite[Corollary 4.1]{fJMPA}, which is also a consequence of \cite[Theorem 4.4]{fJMPA}, is correct.
\begin{proof}[Proof of Theorem \ref{Theorem4.4}] The proof follows the one of \cite[Theorem 4.4]{f_anona}, but we repeat here in a briefly way  for the reader's convenience underling the differences since in \cite[Theorem 4.4]{f_anona} $k$ degenerates at the boundary of the domain, while hereit degenerates in the interior. 

As in \cite{fJMPA}, using the method of characteristic lines, one can prove
the following implicit formula for $v$ solution of \eqref{h=0}:
\begin{equation}\label{implicitformula}
S(T-t) v_T(T+a-t, \cdot),
\end{equation}
if $t \ge  \tilde T + a$  and
\begin{equation}\label{implicitformula1}
v(t,a, \cdot)=\begin{cases}
S(T-t) v_T(T+a-t, \cdot)\!+\int_a^{T+a-t}S(s-a)\beta(s, \cdot)v(s+t-a, 0, \cdot) ds, &\Gamma\!= \!\bar a \\
\int_a^AS(s-a)\beta(s, \cdot)v(s+t-a, 0, \cdot) ds, & \Gamma\!= \!\Gamma_{A,T},
\end{cases}
\end{equation}
otherwise. Here  $(S(t))_{t \ge0}$ is the semigroup generated by the operator $\mathcal A_0 -\mu Id$ for all $u \in D(\mathcal A_0)$  ($Id$ is the identity operator), $\Gamma_{A,T}:= A -a +t-\tilde T$ and
\begin{equation}\label{Gamma}
\Gamma:= \min \{\bar a, \Gamma_{A,T}\}.
\end{equation}
In particular, it results
\begin{equation}\label{v(0)}
v(t,0, \cdot):= S(T-t) v_T(T-t, \cdot).
\end{equation}
Proceeding as in \cite[Theorem 4.4]{f_anona}, with suitable changes, one has that there exists a positive constant $C$ such that:
\begin{equation}\label{t=01}
 \int_{Q_{A,1}} v^2(\tilde T,a,x) dxda  \le C\int_{\frac{T}{4}}^{\frac{3T}{4}} \int_{Q_{A,1}} v^2(t,a,x) dxdadt.
\end{equation}
Take $\delta \in (0, A)$. By the previous inequality, we have
\begin{equation}\label{t=0}
\begin{aligned}
 \int_{Q_{A,1}} v^2(\tilde T,a,x) dxda  \le C \int_{\frac{T}{4}}^{\frac{3T}{4}} \left(\int_0^\delta + \int_\delta^A \right)\int_0^1 v^2(t,a,x) dxdadt.
\end{aligned}
\end{equation}
Now, we will estimate the term $\ds\int_{\frac{T}{4}}^{\frac{3T}{4}}  \int_\delta^A\int_0^1v^2(t,a,x) dxdadt$. It results that  
\begin{equation}\label{terminenuovo11}
\begin{aligned}
 \int_0^1v^2dx
& 
\le C\left( \int_0^1 k v_x^2 dx +\int_0^1 \frac{(x-x_0)^2}{k} v^2dx\right) ,
\end{aligned}\end{equation}
for a  strictly positive constant $C.$  
Indeed, using the Young's inequality to the function $v$,
we obtain
 \begin{equation}\label{nu'}
 \begin{aligned}
 \int_0^1|v|^{2}\,dx
                    & \le C
\int_0^1\left(\frac{k^{1/3}}{(x-x_0)^{2/3}}v^2\right)^{3/4}\left(
\frac{(x-x_0)^2}{k}v^2\right)^{1/4}dx \\
&\le C\int_0^1
\frac{k^{1/3}}{(x-x_0)^{2/3}}v^2dx+
 C \int_0^1
\frac{(x-x_0)^2}{k} v^2 dx.
 \end{aligned}
  \end{equation}
 Now, consider  the term 
  \[
  \int_0^1
\frac{k^{1/3}}{(x-x_0)^{2/3}}v^2dx.
 \]
 If $M > \ds \frac{4}{3}$,  take the function $\gamma(x) = (k(x)|x-x_0|^4)^{1/3}$. Clearly, $\displaystyle \gamma(x)=  k(x)
\left(\frac{(x-x_0)^2}{k(x)}\right)^{2/3}\le C k(x)$ and
$\displaystyle \frac{k^{1/3}}{(x-x_0)^{2/3}}=
\frac{\gamma(x)}{(x-x_0)^2}$. Moreover, using Hypothesis \ref{BAss01}, one
has that the function $\displaystyle\frac{\gamma(x)}{|x-x_0|^q} = \left(\frac{k(x)}{|x-x_0|^\theta}\right)^{\frac{1}{3}}$, where
$\displaystyle q: =\frac{4+\vartheta}{3}\in(1,2)$,  is non increasing on the left of $x=x_0$ and non decreasing on the right of $x=x_0$.  Hence,  by the Hardy-Poincar\'e inequality given in \cite[Proposition 2.6]{fm}, 
\[
 \int_0^1
\frac{k^{1/3}}{(x-x_0)^{2/3}}v^2dx =  \int_0^1
\frac{\gamma(x)}{(x-x_0)^2} v^2 dx  \le C \int_0^1 k v_x^2 dx.
\]
Thus, if $M > \ds\frac{4}{3}$, by \eqref{nu'},
\eqref{terminenuovo11} holds.
Now, assume  $M \le \ds\frac{4}{3}$  and introduce
the function $p(x) = |x-x_0|^{4/3}$. Obviously, there exists $ q \in
\left(1, \displaystyle\frac{4}{3}\right)$ such that the function
$\displaystyle x\mapsto\frac{p(x)}{|x-x_0|^q}$ is nonincreasing on
the left of $x=x_0$ and nondecreasing on the right of $x=x_0$. Thus,
applying again \cite[Proposition 2.6]{fm}, one has
\begin{equation}\label{hpapplbis}
\begin{aligned}
\int_0^1 \frac{k^{1/3}}{|x-x_0|^{2/3}}v^2dx & \le \max_{[0,1]}
k^{1/3}\int_0^1 \frac{1}{|x-x_0|^{2/3}}v^2dx \\
&= \max_{ [0,1]} k^{1/3}\int_0^1  \frac{p}{(x-x_0)^2} v^2 dx\\
&\le \max_{[0,1]} k^{1/3}C\int_0^1 p (v_x)^2 dx \\
& = \max_{[0,1]} k^{1/3} C\int_0^1  k \frac
{|x-x_0|^{4/3}}{k} (v_x)^2 dx\\
&\le \max_{ [0,1]}k^{1/3} C \int_0^1 k (v_x)^2 dx,
\end{aligned}
\end{equation}
Hence, \eqref{terminenuovo11} still holds and
\begin{equation}\label{ribo1}
\begin{aligned}
\int_{\frac{T}{4}}^{\frac{3T}{4}}  \int_\delta^A\int_0^1v^2(t,a,x) dxdadt &\le C \int_{\frac{T}{4}}^{\frac{3T}{4}}  \int_\delta^A\int_0^1\tilde \Theta v_x^2e^{2s\varphi} dxdadt \\
& +C \int_{\frac{T}{4}}^{\frac{3T}{4}}  \int_\delta^A\int_0^1 \tilde \Theta^3 \frac{(x-x_0)^2}{k}v^2e^{2s\varphi} dxdadt.
\end{aligned}
\end{equation}
The rest of the proof follows as in \cite[Theorem 4.4]{f_anona}, so we omit it.
 
\end{proof}

\begin{corollary}\label{CorOb} Assume Hypotheses $\ref{conditionbeta}$, with $\bar a=T <A$, and $\ref{ipogenerale}$.  Then, for every $\delta \in (0,A)$, there exists a  strictly positive constant $C=C(\delta)$  such that every
solution $v$ of \eqref{h=0} in
$\mathcal V$
satisfies
\[
\begin{aligned}
 \int_0^A\int_0^1 v^2( 0,a,x) dxda  &\le C\int_0^T \int_0^\delta \int_0^1v^2(t,a,x) dxdadt\\
 &+ C\left( \int_0^T\int_0^1 v_T^2(a,x)dxda+ \int_0^T \int_0^A\int_ \omega v^2 dx dadt\right).
\end{aligned}\]
Moreover, if $v_T(a,x)=0$ for all $(a,x) \in (0, T) \times (0,1)$, one has
\[
\begin{aligned}
 \int_0^A\!\!\int_0^1  v^2(0,a,x) dxda  &\le C\left(\int_0^T \int_0^\delta\!\! \int_0^1 v^2(t,a,x) dxdadt +\!\! \int_0^T\!\! \int_0^A\!\!\int_ \omega v^2 dx dadt\right).
\end{aligned}
\]
\end{corollary}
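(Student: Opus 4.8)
The plan is to obtain Corollary \ref{CorOb} as the boundary case $\bar a = T$ of Theorem \ref{Theorem4.4}. Setting $\bar a = T$ forces $\tilde T = T - \bar a = 0$, so that the left-hand side $\int_0^A\int_0^1 v^2(T-\bar a, a, x)\, dx\, da$ reduces exactly to the quantity $\int_0^A\int_0^1 v^2(0, a, x)\, dx\, da$ appearing in the corollary. First I would reproduce the implicit representation formula \eqref{implicitformula}--\eqref{implicitformula1} for the solution $v$ of \eqref{h=0}, obtained by the method of characteristic lines and now evaluated at $\tilde T = 0$; in particular \eqref{v(0)} continues to hold and relates the trace $v(t, 0, \cdot)$ to the semigroup $(S(t))_{t\ge 0}$ generated by $\mathcal A_0 - \mu\, Id$ applied to the terminal datum.

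Next I would establish the analogue of \eqref{t=01} at $\tilde T = 0$, namely
\[
\int_0^A\int_0^1 v^2(0, a, x)\, dx\, da \le C \int_{\frac{T}{4}}^{\frac{3T}{4}} \int_0^A \int_0^1 v^2(t, a, x)\, dx\, da\, dt,
\]
using the contraction estimate for $(S(t))$ together with Hypothesis \ref{conditionbeta}, which—since $\bar a = T$—guarantees that the nonlocal birth term contributes nothing on the relevant characteristics up to age $\bar a = T$. Splitting the age integral as $\int_0^\delta + \int_\delta^A$, I would retain the first piece as the distributed observation term on the right-hand side, and bound the second piece $\int_{T/4}^{3T/4}\int_\delta^A\int_0^1 v^2$ by inserting the weight $\tilde\Theta$ of Remark \ref{remarkultimo} and the exponential $e^{2s\varphi}$ exactly as in \eqref{ribo1}. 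The pointwise inequality \eqref{terminenuovo11}, proved via the Young's inequality \eqref{nu'} and the Hardy--Poincar\'e inequality of \cite[Proposition 2.6]{fm} under Hypothesis \ref{BAss01}, converts $\int_0^1 v^2\,dx$ into $\int_0^1 \big(k v_x^2 + \tfrac{(x-x_0)^2}{k} v^2\big)\,dx$, which the $\omega$-local Carleman estimate of Theorem \ref{Cor2}—applied on the time-age window $(T/4, 3T/4)\times(\delta, A)$—controls in terms of $\int_Q f^2 e^{2s\Phi}$ plus the control observation $\int_0^T\int_0^A\int_\omega v^2$. Since in \eqref{h=0} the effective source is $-\beta\, v(t, 0, \cdot)$, this term is reabsorbed through \eqref{v(0)}, producing the $\int_0^T\int_0^1 v_T^2$ contribution in the first assertion.

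For the second inequality I would simply take $v_T \equiv 0$ on $(0, T)\times(0, 1)$, which annihilates the terminal contribution via \eqref{v(0)} and leaves only the distributed observation on $(0,\delta)$ and the control observation on $\omega$. The main obstacle I anticipate is precisely the boundary nature of the hypothesis $\bar a = T$: whereas Theorem \ref{Theorem4.4} exploits the strict gap $\bar a < T$ to run the characteristics strictly before the fertility threshold, here the threshold coincides with $T$, so I must verify that the representation \eqref{implicitformula} and the estimate \eqref{t=01} remain valid in the limit $\tilde T \to 0$—in particular that the integral curves still meet the slice $a = 0$ within the window $(T/4, 3T/4)$ and that the nonlocal birth term does not spoil the dissipative bound. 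Once this degenerate-limit verification is carried out, the remaining Carleman and Hardy--Poincar\'e absorption steps are identical to those in the proof of \cite[Theorem 4.4]{f_anona}.
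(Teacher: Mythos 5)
Your proposal is correct and follows essentially the same route as the paper: the corollary is obtained exactly as the boundary case $\bar a = T$ of Theorem \ref{Theorem4.4}, whose proof (semigroup representation via characteristics, the dissipativity estimate \eqref{t=01}, the splitting $\int_0^\delta+\int_\delta^A$, the Hardy--Poincar\'e inequality \eqref{terminenuovo11} and the $\omega$-local Carleman estimate of Theorem \ref{Cor2}) goes through verbatim with $T-\bar a=0$, since none of its steps uses the strict inequality $\bar a<T$. Your worry about the degenerate limit is legitimate but harmless: with $\bar a=T$, Hypothesis \ref{conditionbeta} still kills the birth term along every characteristic reaching age $a=0$ at times $t\in(0,T)$, so \eqref{v(0)} and \eqref{t=01} hold unchanged.
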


Proceeding as in Theorem \ref{Theorem4.4}, one can prove the analogous result in the case  $T>A$. Indeed, with suitable changes, one can prove again
\eqref{implicitformula},
if $t \ge  \tilde T + a$,  and
\eqref{implicitformula1},
otherwise. 
In particular, we have again
\eqref{v(0)}.  Thus:

\begin{theorem}\label{Theorem4.4_new} Assume Hypotheses $\ref{conditionbeta}$, with $ \bar a<A<T$, and $\ref{ipogenerale}$. Then, for every $\delta \in (0,A)$, there exists a  strictly positive constant $C=C(\delta)$  such that every
solution $v$ of \eqref{h=0} in
$\mathcal V$
satisfies
\eqref{T<A}.
Moreover, if $v_T(a,x)=0$ for all $(a,x) \in (0,  T) \times (0,1)$, one has
\eqref{T<A1}.
\end{theorem}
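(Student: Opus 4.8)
The plan is to transfer \emph{verbatim} the scheme of the proof of Theorem \ref{Theorem4.4}, the only genuine novelty being the re-derivation of the representation formulas \eqref{implicitformula}--\eqref{implicitformula1} in the regime $\bar a<A<T$. First I would analyze the characteristic lines of the transport operator $\partial_t+\partial_a$ in the rectangle $(0,T)\times(0,A)$: these are the segments $\{t-a=\text{const}\}$, and along each of them the $x$-dynamics is governed by the semigroup $(S(t))_{t\ge0}$ generated by $\mathcal A_0-\mu\,\mathrm{Id}$. Integrating \eqref{h=0} from the terminal datum $v(T,a,\cdot)=v_T$ and from the homogeneous face $v(t,A,\cdot)=0$, one follows the forward characteristic through $(t,a)$ and distinguishes whether it first meets the face $t=T$ (at age $T+a-t$) or the lateral face $a=A$; since $T>A$, a characteristic can now exit through $a=A$ before reaching $t=T$, a configuration that does not arise when $T\le A$. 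This competition is precisely encoded by $\Gamma=\min\{\bar a,\Gamma_{A,T}\}$ in \eqref{Gamma}, with $\Gamma_{A,T}=A-a+t-\tilde T$: the condition $t\ge\tilde T+a$ is equivalent to $T+a-t\le\bar a$, so on that set the whole forward characteristic lies in ages $\le\bar a$ where $\beta\equiv0$ by Hypothesis \ref{conditionbeta}, the renewal term switches off, and one recovers \eqref{implicitformula}; in the complementary set one obtains \eqref{implicitformula1}. Setting $a=0$ then yields again \eqref{v(0)}, i.e. $v(t,0,\cdot)=S(T-t)v_T(T-t,\cdot)$.

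Once \eqref{implicitformula}, \eqref{implicitformula1} and \eqref{v(0)} are available, the argument is insensitive to the sign of $T-A$. Using the contraction-type estimate $\|S(t)\|\le C$ together with the representation formula, I would first establish the energy inequality \eqref{t=01},
\[
\int_{Q_{A,1}} v^2(\tilde T,a,x)\,dxda \;\le\; C\int_{T/4}^{3T/4}\int_{Q_{A,1}} v^2(t,a,x)\,dxdadt,
\]
with $\tilde T=T-\bar a$, and then split the right-hand side as in \eqref{t=0} into the young-age part $\int_0^\delta$, which is absorbed directly into the observation region of \eqref{T<A}, and the mature-age part $\int_\delta^A$.

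To control the mature-age contribution I would reproduce the pointwise bound \eqref{terminenuovo11},
\[
\int_0^1 v^2\,dx \;\le\; C\left(\int_0^1 k\,v_x^2\,dx + \int_0^1 \frac{(x-x_0)^2}{k}\,v^2\,dx\right),
\]
obtained from Young's inequality \eqref{nu'} followed by the Hardy--Poincar\'e inequality of \cite[Proposition 2.6]{fm}, applied with weight $p(x)=|x-x_0|^{4/3}$ when $M\le\tfrac43$ and with $\gamma(x)=(k(x)|x-x_0|^4)^{1/3}$ when $M>\tfrac43$, exactly as in \eqref{hpapplbis}. Inserting the $(t,a)$-weight $\tilde\Theta\,e^{2s\varphi}$ of Remark \ref{remarkultimo} (relative to the box $(T/4,3T/4)\times(\delta,A)$), on which the singular factors are bounded below, converts $\int_\delta^A\int_0^1 v^2$ into the two Carleman quantities $\int\tilde\Theta k v_x^2 e^{2s\varphi}$ and $\int\tilde\Theta^3\frac{(x-x_0)^2}{k}v^2e^{2s\varphi}$, as in \eqref{ribo1}. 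The $\omega$-local Carleman estimate of Theorem \ref{Cor2} (in its Remark \ref{remarkultimo} version) then bounds these by $\int f^2 e^{2s\Phi}$ plus the observation $\int_\omega v^2$; since the forcing in \eqref{h=0} is $f=-\beta(a,\cdot)v(t,0,\cdot)$, formula \eqref{v(0)} with $\|S(\cdot)\|\le C$ controls it by $\int_0^T\int_0^1 v_T^2$, which yields the $v_T$ term of \eqref{T<A} and drops when $v_T\equiv0$, giving \eqref{T<A1}.

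The main obstacle is the first step: the characteristic analysis for $T>A$ must be carried out with care, since the order in which a forward characteristic hits the faces $t=T$ and $a=A$ is reversed on part of the rectangle, and one must check that the threshold $t=\tilde T+a$ together with $\Gamma=\min\{\bar a,\Gamma_{A,T}\}$ still reproduces \emph{exactly} \eqref{implicitformula}--\eqref{implicitformula1}. Everything downstream---the estimate \eqref{t=01}, the Hardy--Poincar\'e reduction \eqref{terminenuovo11}, and the final application of Theorem \ref{Cor2}---transfers unchanged from Theorem \ref{Theorem4.4}.
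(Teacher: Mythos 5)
Your proposal is correct and follows essentially the same route as the paper, which itself disposes of the case $\bar a<A<T$ in a few lines: it re-establishes the characteristic-line representation formulas \eqref{implicitformula}--\eqref{implicitformula1} and \eqref{v(0)} "with suitable changes" and then repeats the proof of Theorem \ref{Theorem4.4} verbatim (the energy inequality \eqref{t=01}, the splitting \eqref{t=0}, the Hardy--Poincar\'e reduction \eqref{terminenuovo11}--\eqref{ribo1}, and Theorem \ref{Cor2}). Your more explicit discussion of which face a forward characteristic meets first when $T>A$, and of why $\Gamma=\min\{\bar a,\Gamma_{A,T}\}$ still encodes this, is exactly the "suitable change" the paper leaves implicit.
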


Actually, proceeding as in \cite{fJMPA} with suitable changes, we can improve the previous results in the following way:
\begin{theorem}\label{CorOb1'}Assume Hypotheses $\ref{conditionbeta}$ and $\ref{ipogenerale}$. If $T<A$, then, for every $\delta \in (T,A)$,
there exists a  strictly positive constant $C=C(\delta)$  such that every
solution $v$ of \eqref{h=0} in
$\mathcal V$
satisfies
\begin{equation}\label{ribo}
 \int_0^A\int_0^1 v^2(T-\bar a,a,x) dxda \le 
 C\left( \int_0^\delta \int_0^1 v_T^2(a,x)dxda+ \int_0^T \int_0^A\int_ \omega v^2 dx dadt\right).
\end{equation}
If $A<T$, then, for every $\delta \in (\bar a,A)$,
there exists a  strictly positive constant $C= C(\delta)$  such that every
solution $v$ of \eqref{h=0} in
$\mathcal V$
satisfies \eqref{ribo}.
\end{theorem}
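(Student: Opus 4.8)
The plan is to upgrade Theorems \ref{Theorem4.4} and \ref{Theorem4.4_new} by absorbing both their interior term $\int_0^T\int_0^\delta\int_0^1 v^2\,dxdadt$ and their full terminal term $\int_0^T\int_0^1 v_T^2\,dxda$ into the right-hand side of \eqref{ribo}. Throughout I regard $v$ as a solution of the source problem \eqref{adjoint} with $f:=-\beta(a,x)v(t,0,x)$, and I exploit the single-slice bound \eqref{t=01} already obtained in the proof of Theorem \ref{Theorem4.4} (and its analogue for $A<T$), which reduces the left-hand side of \eqref{ribo} to controlling $\int_{T/4}^{3T/4}\int_0^A\int_0^1 v^2\,dxdadt$. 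I split this time-window integral in age as $\int_0^{a_0}+\int_{a_0}^A$, where $a_0\in\big(0,\delta-\tfrac{3}{4}T\big)$ is fixed; note that $\delta>T$ makes this interval nonempty in the case $T<A$.

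For the bulk $a\in(a_0,A)$ I apply the $\omega$-local Carleman estimate of Theorem \ref{Cor2}. On the compact set $(T/4,3T/4)\times(a_0,A)$ the weight $\Theta e^{2s\varphi}$ is bounded above and below, so the Hardy--Poincar\'e estimate \eqref{terminenuovo11} gives
\[
\int_{T/4}^{3T/4}\!\int_{a_0}^A\!\int_0^1 v^2\,dxdadt\le C\int_Q\Big(s\Theta k v_x^2+s^3\Theta^3\tfrac{(x-x_0)^2}{k}v^2\Big)e^{2s\varphi}\,dxdadt,
\]
and Theorem \ref{Cor2} bounds the right-hand side by $C\int_Q f^2 e^{2s\Phi}\,dxdadt+C\int_0^T\int_0^A\int_\omega v^2\,dxdadt$. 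Since $\Phi\le0$ one has $e^{2s\Phi}\le1$, and since $\mathcal A_0$ is nonpositive and $\mu\ge0$ the semigroup $S$ is contractive; hence, using \eqref{v(0)}, the source obeys $\int_Q f^2 e^{2s\Phi}\le C\int_0^T\|v(t,0,\cdot)\|_{L^2(0,1)}^2\,dt=C\int_0^T\|S(T-t)v_T(T-t)\|_{L^2}^2\,dt\le C\int_0^T\|v_T(\tau)\|_{L^2}^2\,d\tau$. When $T<A$ the substitution $\tau=T-t$ together with $T<\delta$ turns this into $C\int_0^\delta\int_0^1 v_T^2\,dxda$, exactly the terminal term of \eqref{ribo}.

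For the thin strip $a\in(0,a_0)$, where the Carleman weight degenerates as $a\to0$, I instead use the representation formula \eqref{implicitformula}--\eqref{implicitformula1} directly. On $(T/4,3T/4)\times(0,a_0)$ the direct contribution samples $v_T$ at age $T+a-t\le\tfrac{3}{4}T+a_0<\delta$, while the renewal contribution, combined with \eqref{v(0)}, samples $v(\cdot,0,\cdot)=S(\cdot)v_T(\cdot)$ only at ages $<T-t<\delta$; contractivity of $S$ and Hypothesis \ref{conditionbeta} then yield $\int_{T/4}^{3T/4}\int_0^{a_0}\int_0^1 v^2\le C\int_0^\delta\int_0^1 v_T^2$. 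Summing the two age strips, inserting \eqref{t=01}, and using $\int_0^T\le\int_0^\delta$ proves \eqref{ribo} in the case $T<A$.

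For $A<T$ the same architecture applies with $\delta\in(\bar a,A)$, but reducing the renewal source $\int_0^T\|v(t,0,\cdot)\|_{L^2}^2\,dt$ to terminal data on $(0,\delta)$ is the main obstacle: when $t<T-A$ one has $T-t>A$, so \eqref{v(0)} is no longer directly available and one must iterate the renewal formula \eqref{implicitformula1}, a Volterra-type recursion that closes precisely because $\beta\equiv0$ on $[0,\bar a]$ (which is why $\delta>\bar a$ is imposed), in order to re-express $v(\cdot,0,\cdot)$ through $v_T$ restricted to $(0,\delta)$ and the $\omega$-observation. I expect this bookkeeping, namely tracking which terminal ages are sampled along the characteristics, closing the renewal recursion, and keeping the constant uniform as $\delta\to T^+$ (respectively $\delta\to\bar a^+$), to be the genuinely delicate part; the degeneracy of the weight at $a\to0$, handled by reverting to the representation formula on the strip $(0,a_0)$, is only a secondary technical point.
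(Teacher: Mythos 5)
Your overall architecture (a single time slice bounded by a time--age window, an age splitting into a ``bulk'' handled by the $\omega$-local Carleman estimate of Theorem \ref{Cor2} plus Hardy--Poincar\'e and a thin strip near $a=0$ handled by the characteristics formulas \eqref{implicitformula}--\eqref{implicitformula1}) is exactly the paper's, but you integrate over the window $[T/4,3T/4]$ coming from \eqref{t=01}, whereas the paper integrates \eqref{bo} over a window of length at most $\bar a$ abutting $t=T$, namely $[T-\bar a,\,T+\delta-2\bar a]$ (or $[T-\bar a,\,T-\bar a/4]$ when $\delta\ge 2\bar a$), and applies the Carleman estimate only on that window via Remark \ref{remarkultimo}. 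This choice is not cosmetic: the clean identity \eqref{v(0)}, $v(t,0,\cdot)=S(T-t)v_T(T-t,\cdot)$, holds without a renewal remainder only when $T-t\le\bar a$ (otherwise the characteristic from $(t,0)$ picks up the source $\int_{\bar a}^{T-t}S(s)\beta(s,\cdot)v(t+s,0,\cdot)\,ds$, and when $T-t>A$ the term $v_T(T-t,\cdot)$ is not even defined). So your equality $\int_0^T\|v(t,0,\cdot)\|^2\,dt=\int_0^T\|S(T-t)v_T(T-t,\cdot)\|^2\,dt$ is false as written. In the case $T<A$ this is repairable: iterating the Volterra recursion finitely many times (each step gains at least $\bar a$ in time) re-expresses $v(\cdot,0,\cdot)$ through $v_T$ at ages in $(0,T)\subset(0,\delta)$, so your argument for $T<A$ closes after this correction.

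For $A<T$ the gap is genuine and your scheme does not close. First, your age split at $a_0\in\bigl(0,\delta-\tfrac34T\bigr)$ is vacuous, since $\delta<A<T$ makes $\delta-\tfrac34T$ possibly negative. More fundamentally, the quantity you need to absorb, $\int_0^T\|v(t,0,\cdot)\|^2\,dt$, cannot be dominated by $C\int_0^\delta\int_0^1 v_T^2\,dx\,da$: for $t<T-A$ the characteristic hits $a=A$ first and $v(t,0,\cdot)$ is given purely by the renewal integral; iterating it eventually reaches times $t'>T-A$ where $v_T(T-t',\cdot)$ enters with $T-t'$ ranging over all of $(0,A)$, hence over ages exceeding $\delta$. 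The paper's device precisely avoids this: by restricting both the time integration and the Carleman source to $t\in[T-\bar a,\,T+\delta-2\bar a]$, one has $T-t\le\bar a<\delta$ throughout, so \eqref{v(0)} applies exactly and only the ages in $(0,\bar a)\subset(0,\delta)$ of $v_T$ are ever sampled, with no iteration needed; the thin strip $a\in(0,\delta-\bar a)$ is then split at $a=t-T+\bar a$ and treated by \eqref{prima}--\eqref{seconda}. You correctly flagged the renewal bookkeeping as the delicate point, but the resolution is not to iterate the recursion over all of $(0,T)$ --- it is to shrink the time window so that no recursion is necessary.
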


\begin{proof}
If $T<A$ the proof of the previous theorem is analogous to the one of \cite[Theorem 4.6]{fJMPA}, with suitable changes, so we omit it.
\\
Now, consider the case $A<T$ and fix $\delta \in (\bar a, A)$. We distinguish between the two cases $\delta < 2\bar a$ and $\delta \ge 2\bar a$.

First of all,  consider  $\delta < 2\bar a$: as in \cite[Theorem 4.4.]{f_anona}, we can prove
\begin{equation}\label{bo}
 \int_{Q_{A,1}}  v^2(T-\bar a,a,x) dxda  \le C \int_{Q_{A,1}} v^2(t,a,x) dxda.
\end{equation}
Then, integrating over $\ds \left[T-\bar a,  T+\delta -2\bar a\right]$, we have the following inequality:
\begin{equation}\label{t=041}
 \int_{Q_{A,1}} v^2(T-\bar a,a,x) dxda  \le C\int_{T-\bar a}^{T+\delta -2\bar a} \left(\int_0^{\delta - \bar a} + \int_{\delta - \bar a}^A \right)\int_0^1 v^2(t,a,x) dxdadt.
\end{equation}
Using Theorem \ref{Cor2}, we can prove
\begin{equation}\label{bo1}
\begin{aligned}
\int_{T-\bar a}^{T+\delta -2\bar a}\int_{\delta - \bar a}^A \!\!\int_0^1v^2(t,a,x) dxdadt &\le C\int_0^\delta\int_0^1\!\! v_T^2(a,x)dxda\\&+C \int_0^T \!\!\int_0^A\!\!\int_ \omega v^2 dx dadt.
\end{aligned}
\end{equation}
Indeed, by \eqref{ribo1} applied to $[T-\bar a, +\delta -2\bar a]$ and Theorem \ref{Cor2}, we have
\[
\begin{aligned}
&\int_{T-\bar a}^{T+\delta -2\bar a}  \int_\delta^A\int_0^1v^2(t,a,x) dxdadt \le C \int_{T-\bar a}^{T+\delta -2\bar a}  \int_\delta^A\int_0^1\tilde \Theta v_x^2e^{2s\varphi} dxdadt \\
& +C \int_{T-\bar a}^{T+\delta -2\bar a} \int_\delta^A\int_0^1 \tilde \Theta^3 \frac{(x-x_0)^2}{k}v^2e^{2s\varphi} dxdadt\\
& \le 
C\left(\int_{T-\bar a}^{T+\delta -2\bar a} \int_0^A\int_0^1f^2dxdadt+ \int_0^T \int_0^A\int_ \omega v^2 dx dadt\right),
\end{aligned}
\]
where, in this case, $f(t,a,x):=-\beta(a,x)v(t,0,x)$. Hence,
\[
\begin{aligned}
&\int_{T-\bar a}^{T+\delta -2\bar a}  \int_\delta^A\int_0^1v^2(t,a,x) dxdadt  \le 
C\int_{T-\bar a}^{T+\delta -2\bar a} \int_0^A\int_0^1v^2(t,0,x)dxdadt\\&+ C\int_0^T \int_0^A\int_ \omega v^2 dx dadt\\
&\le C \left(\int_{T-\bar a}^{T+\delta -2\bar a} \int_0^1v_T^2(T-t,x)dxdt+ \int_0^T \int_0^A\int_ \omega v^2 dx dadt\right)\\
&=C \left(\int_{-\delta+2\bar a}^{\bar a}\int_0^1v_T^2(a,x)dxda+ \int_0^T \int_0^A\int_ \omega v^2 dx dadt\right)\\
&\le C  \left(\int_0^{\delta} \int_0^1v_T^2(a,x)dxda+ \int_0^T \int_0^A\int_ \omega v^2 dx dadt\right).
\end{aligned}
\]
Hence \eqref{bo1} follows.

It remains to estimate the following integral:
\[
\int_{T-\bar a}^{T+\delta -2\bar a}\int_0^{\delta - \bar a}\int_0^1  v^2(t,a,x) dxdadt.
\]
Observe that, since $t \le T+\delta -2\bar a$,  $t-T+\bar a < \delta- \bar a$, hence
\begin{equation}\label{zero}
\begin{aligned}
\int_{T-\bar a}^{T+\delta -2\bar a}\int_0^{\delta - \bar a}\int_0^1  v^2(t,a,x) dxdadt&= \int_{T-\bar a}^{T+\delta -2\bar a}\int_0^{t-T+\bar a}\int_0^1  v^2(t,a,x) dxdadt\\& + \int_{T-\bar a}^{T+\delta -2\bar a}\int_{t-T+\bar a}^{\delta -\bar a} \int_0^1  v^2(t,a,x) dxdadt.
\end{aligned}
\end{equation}
Now, by \eqref{implicitformula} and  by the boundedness of $(S(t))_{t \ge0}$,
\begin{equation}\label{prima}
\begin{aligned}
 &\int_{T-\bar a}^{T+\delta -2\bar a}\int_0^{t-T+\bar a}\int_0^1  v^2(t,a,x) dxdadt\\
 &=\int_{T-\bar a}^{T+\delta -2\bar a}\int_0^{t-T+\bar a}\int_0^1 (S(T-t)v_T(T+a-t,x))^2dxdadt\\
 & \le C\int_{T-\bar a}^{T+\delta -2\bar a}\int_0^{t-T+\bar a}\int_0^1 v_T^2(T+a-t,x)dxdadt\\
 &= C\int_{-\delta +2\bar a}^{\bar a}\int_0^{\bar a-z}\int_0^1 v_T^2(z+a,x)dxdadz\\
 &= C\int_{-\delta +2\bar a}^{\bar a}\int_z^{\bar a}\int_0^1 v_T^2(\sigma,x)dxd\sigma dz\\
 &\le C\int_{-\delta +2\bar a}^{\bar a} \int_0^{\bar a}\int_0^1 v_T^2(\sigma,x)dxd\sigma dz\\
 &\le C \int_0^{\bar a}\int_0^1 v_T^2(\sigma,x)dxd\sigma \le C\int_0^\delta\int_0^1 v_T^2(\sigma,x)dxd\sigma
\end{aligned}
\end{equation}
On the other hand, if $t \ge T-\bar a$ and $a \in (t-T+\bar a, \delta -\bar a)$, it results that $T-t < A-a$, thus $\Gamma = \bar a$ (to this purpose recall that $\delta \in (\bar a, A)$ and $\Gamma$ is defined in \eqref{Gamma}). Hence in \eqref{implicitformula1} we have to consider the first formula, i.e.
\[
v(t,a, \cdot)=
S(T-t) v_T(T+a-t, \cdot)\!+\int_a^{T+a-t}S(s-a)\beta(s, \cdot)v(s+t-a, 0, \cdot) ds.
\]
It follows that, proceeding as in \eqref{prima},
\[
\begin{aligned}
&
\int_{T-\bar a}^{T+\delta -2\bar a}\int_{t-T+\bar a}^{\delta -\bar a} \int_0^1  v^2(t,a,x) dxdadt\\
&\le C\int_{T-\bar a}^{T+\delta -2\bar a}\int_{t-T+\bar a}^{\delta -\bar a} \int_0^1 v^2_T(T+a-t,x)dxdadt \\
&+C \int_{T-\bar a}^{T+\delta -2\bar a}\int_{t-T+\bar a}^{\delta -\bar a} \int_0^1  \left(\int_a^{T+a-t}v^2(s+t-a,0,x)ds \right)dxdadt\\
&= C\int_{-\delta+2 \bar a}^{\bar a}\int_{\bar a-z}^{\delta -\bar a} \int_0^1 v^2_T(z+a,x)dxdadz \\
& +C \int_{T-\bar a}^{T+\delta -2\bar a}\int_{t-T+\bar a}^{\delta -\bar a} \int_0^1  \left(\int_a^{T+a-t}v_T^2(T-s-t+a,x)ds \right)dxdadt
\\
&=C\int_{-\delta+2 \bar a}^{\bar a}\int_{\bar a}^{z+\delta -\bar a} \int_0^1 v^2_T(\sigma,x)dxd\sigma dz \\
&+C \int_{T-\bar a}^{T+\delta -2\bar a}\int_{t-T+\bar a}^{\delta -\bar a} \int_0^1  \left(\int_{-a}^{T-a-t}v_T^2(a+z,x)dz \right)dxdadt
\end{aligned}
\]
\[
\begin{aligned}
& \le C\int_{-\delta+2 \bar a}^{\bar a}\int_{\bar a}^{z+\delta -\bar a} \int_0^1 v^2_T(\sigma,x)dxd\sigma dz \\
&+C \int_{T-\bar a}^{T+\delta -2\bar a}\int_{t-T+\bar a}^{\delta -\bar a} \int_0^1  \left(\int_{-a}^{T-a-t}v_T^2(a+z,x)dz \right)dxdadt.
\end{aligned}
\]
Using the fact that in the first integral $z \in (-\delta+2 \bar a, \bar a)$ and in the second one  $t \ge T-\bar a$, one has $z+\delta-\bar a \le \delta$ and  $T-t \le \bar a$, respectively, this implies
\begin{equation}\label{seconda}
\begin{aligned}
&\int_{T-\bar a}^{T+\delta -2\bar a}\int_{t-T+\bar a}^{\delta -\bar a} \int_0^1  v^2(t,a,x) dxdadt\le  C\int_{-\delta+2 \bar a}^{\bar a}\int_{\bar a}^{\delta} \int_0^1 v^2_T(\sigma,x)dxd\sigma dz \\
&+C \int_{T-\bar a}^{T+\delta -2\bar a}\int_{t-T+\bar a}^{\delta -\bar a} \int_0^1  \left(\int_0^{T-t}v_T^2(\sigma, x)d\sigma \right)dxdadt\\
&\le C \int_0^{\delta} \int_0^1 v^2_T(\sigma,x)dxd\sigma + C\int_{T-\bar a}^{T+\delta -2\bar a}\int_{t-T+\bar a}^{\delta -\bar a} \int_0^1  \left(\int_0^{\bar a}v_T^2(\sigma, x)d\sigma \right)dxdadt\\
&\le C \int_0^{\delta} \int_0^1 v^2_T(\sigma,x)dxd\sigma.
\end{aligned}
\end{equation}
Hence, by \eqref{t=041} - \eqref{seconda}, \eqref{ribo} follows.

Now, consider the case $\delta \ge 2\bar a$ and, in place of $\ds \left[T-\bar a,  T+\delta -2\bar a\right]$, take the interval $\left[T-\bar a, T-\ds\frac{\bar a}{4}\right]$. Hence we have
\begin{equation}\label{t=041new}
 \int_{Q_{A,1}} v^2(T-\bar a,a,x) dxda  \le C\int_{T-\bar a}^{T-\frac{\bar a}{4}} \left(\int_0^{\delta - \bar a} + \int_{\delta - \bar a}^A \right)\int_0^1 v^2(t,a,x) dxdadt.
\end{equation}
Proceeding as before, we can prove the analogous of \eqref{bo1}, i.e.
\begin{equation}\label{bo2}
\int_{T-\bar a}^{T-\frac{\bar a}{4}}\int_{\delta - \bar a}^A \!\!\int_0^1v^2(t,a,x) dxdadt \le C\left(\int_0^\delta\int_0^1\!\! v_T^2(a,x)dxda+ \int_0^T \!\!\int_0^A\!\!\int_ \omega v^2 dx dadt\right).
\end{equation}
 It remains to estimate
\[
\int_{T-\bar a}^{T-\frac{\bar a}{4}}\int_0^{\delta - \bar a}\int_0^1 v^2(t,a,x) dxdadt.
\]
Also in this case, since $t \in \left[T-\bar a, T-\ds\frac{\bar a}{4}\right]$, it follows that $t-T+\bar a \le \delta -\bar a$ (recall that we are in the case $\delta \ge 2\bar a$). Proceeding as before, one has 
\[
\begin{aligned}
\int_{T-\bar a}^{T-\frac{\bar a}{4}}\int_0^{\delta - \bar a}\int_0^1  v^2(t,a,x) dxdadt&= \int_{T-\bar a}^{T-\frac{\bar a}{4}}\int_0^{t-T+\bar a}\int_0^1  v^2(t,a,x) dxdadt\\& + \int_{T-\bar a}^{T-\frac{\bar a}{4}}\int_{t-T+\bar a}^{\delta -\bar a} \int_0^1  v^2(t,a,x) dxdadt.
\end{aligned}
\]
As for \eqref{prima} and \eqref{seconda}, we have:
\begin{equation}\label{primanew}
\begin{aligned}
& \int_{T-\bar a}^{T-\frac{\bar a}{4}}\int_0^{t-T+\bar a}\int_0^1  v^2(t,a,x) dxdadt\\
&
  \le C\int_{T-\bar a}^{T-\frac{\bar a}{4}}\int_0^{t-T+\bar a}\int_0^1 v_T^2(T+a-t,x)dxdadt\\
 &= C\int_{\frac{\bar a}{4}}^{\bar a}\int_0^{\bar a-z}\int_0^1 v_T^2(z+a,x)dxdadz= C\int_{\frac{\bar a}{4}}^{\bar a}\int_z^{\bar a}\int_0^1 v_T^2(\sigma,x)dxd\sigma dz\\
 &\le C\int_{\frac{\bar a}{4}}^{\bar a} \int_0^{\bar a}\int_0^1 v_T^2(\sigma,x)dxd\sigma dz \le C\int_0^\delta\int_0^1 v_T^2(\sigma,x)dxd\sigma
\end{aligned}
\end{equation}
and
\begin{equation}\label{secondanew}
\begin{aligned}
&
\int_{T-\bar a}^{T-\frac{\bar a}{4}}\int_{t-T+\bar a}^{\delta -\bar a} \int_0^1  v^2(t,a,x) dxdadt\\
&\le C\int_{T-\bar a}^{T-\frac{\bar a}{4}}\int_{t-T+\bar a}^{\delta -\bar a} \int_0^1 v^2_T(T+a-t,x)dxdadt \\
&+C \int_{T-\bar a}^{T-\frac{\bar a}{4}}\int_{t-T+\bar a}^{\delta -\bar a} \int_0^1  \left(\int_a^{T+a-t}v^2(s+t-a,0,x)ds \right)dxdadt\\
&=C\int_{\frac{\bar a}{4}}^{\bar a}\int_{\bar a}^{z+\delta -\bar a} \int_0^1 v^2_T(\sigma,x)dxd\sigma dz \\
&+C \int_{T-\bar a}^{T-\frac{\bar a}{4}}\int_{t-T+\bar a}^{\delta -\bar a} \int_0^1  \left(\int_{-a}^{T-a-t}v_T^2(a+z,x)dz \right)dxdadt\\
& \le C\int_{\frac{\bar a}{4}}^{\bar a}\int_{\bar a}^{\delta} \int_0^1 v^2_T(\sigma,x)dxd\sigma dz \\
&+C \int_{T-\bar a}^{T-\frac{\bar a}{4}}\int_{t-T+\bar a}^{\delta -\bar a} \int_0^1  \left(\int_0^{T-t}v_T^2(\sigma,x)d\sigma \right)dxdadt\\
& \le C\int_{0}^{\delta} \int_0^1 v^2_T(\sigma,x)dxd\sigma +C \int_{T-\bar a}^{T-\frac{\bar a}{4}}\int_{t-T+\bar a}^{\delta -\bar a} \int_0^1  \left(\int_0^{\bar a}v_T^2(\sigma,x)d\sigma \right)dxdadt\\
& \le C\int_0^\delta\int_0^1v_T^2(\sigma,x)d\sigma dx.
\end{aligned}
\end{equation}
By \eqref{t=041new}-\eqref{secondanew}, \eqref{ribo} follows.
\end{proof}

\vspace{0.4cm}
By Theorem \ref{CorOb1'} and using a density argument, one can deduce
the following observability result:
\begin{proposition}\label{PropOI}
\label{obser.} Assume Hypotheses $\ref{conditionbeta}$ and $\ref{ipogenerale}$. If $T<A$, then, for every $\delta \in (T,A)$,
there exists a  strictly positive constant $C=C(\delta)$  such that every
solution  $v\in \mathcal U$ of \eqref{h=0}
satisfies
\begin{equation}\label{OI}
 \int_0^A\int_0^1 v^2(T-\bar a,a,x) dxda \le 
 C\left( \int_0^\delta \int_0^1v_T^2(a,x)dxda+ \int_0^T \int_0^A\int_ \omega v^2 dx dadt\right).
\end{equation}
If $A<T$, then, for every $\delta \in (\bar a,A)$,
there exists a  strictly positive constant $C= C(\delta)$  such that every
solution $v$ of \eqref{h=0} satisfies \eqref{OI}.\\
Here $v_T(a,x)$ is such that $v_T(A,x)=0$ in $(0,1)$.
\end{proposition}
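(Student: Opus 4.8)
The plan is to transfer the observability inequality \eqref{ribo}, already established in Theorem \ref{CorOb1'} for the regular class $\mathcal V$, to an arbitrary solution $v \in \mathcal U$ of \eqref{h=0} by a density argument. The two ingredients are the chain of inclusions $\mathcal W \subset \mathcal V \subset \mathcal U$ recalled before Proposition \ref{caccio}, together with the density of $D(\mathcal A^2)$ in $L^2(Q_{A,1})$, and the continuous dependence of the solution of \eqref{h=0} on its final datum $v_T$.

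First I would fix an admissible datum $v_T \in L^2(Q_{A,1})$ with $v_T(A,\cdot)=0$ and let $v \in \mathcal U$ be the associated solution of \eqref{h=0}. Since $D(\mathcal A^2)$ is dense in $L^2(Q_{A,1})$, I choose a sequence $(v_T^n)_n \subset D(\mathcal A^2)$, each satisfying the compatibility condition $v_T^n(A,\cdot)=0$, with $v_T^n \to v_T$ in $L^2(Q_{A,1})$. The corresponding solutions $v^n$ of \eqref{h=0} then belong to $\mathcal W = C^1([0,T];D(\mathcal A)) \subset \mathcal V$, so Theorem \ref{CorOb1'} applies to each of them and yields, for the fixed $\delta$,
\[
\int_0^A\int_0^1 (v^n)^2(T-\bar a,a,x)\,dxda \le C\left(\int_0^\delta\int_0^1 (v_T^n)^2(a,x)\,dxda + \int_0^T\int_0^A\int_\omega (v^n)^2\,dxdadt\right),
\]
with $C=C(\delta)$ independent of $n$.

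Next I would pass to the limit. By the continuous dependence of the solution of \eqref{h=0} on $v_T$ --- the analogue for the backward problem of the well-posedness estimate \eqref{stimau} of Theorem \ref{theorem_existence}, obtained after the natural time reversal and applied to the difference $v^n-v$, whose final datum is $v_T^n-v_T$ --- one gets $v^n \to v$ in $\mathcal U$. In particular the $\sup_{t\in[0,T]}$ term of \eqref{stimau} gives convergence in $C([0,T];L^2(Q_{A,1}))$, whence $v^n(T-\bar a,\cdot) \to v(T-\bar a,\cdot)$ in $L^2(Q_{A,1})$, so the left-hand side converges; the $L^2(Q)$ convergence controls the interior observation term $\int_0^T\int_0^A\int_\omega (v^n)^2$, and $v_T^n \to v_T$ in $L^2(Q_{A,1})$ handles the term $\int_0^\delta\int_0^1 (v_T^n)^2$. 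Letting $n \to \infty$ therefore produces \eqref{OI} for $v$. The case $A<T$ is identical, invoking the second part of Theorem \ref{CorOb1'} for $\delta \in (\bar a, A)$.

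The main obstacle is the convergence of the trace term $\int_0^A\int_0^1 (v^n)^2(T-\bar a,\cdot)$: it is the only quantity not bounded by a plain $L^2(Q)$ norm, and its passage to the limit requires the uniform-in-time convergence $v^n \to v$ in $C([0,T];L^2(Q_{A,1}))$, which is precisely the content of the $\sup_t$ part of \eqref{stimau}. All remaining terms pass to the limit by the elementary continuity of the $L^2$ norm, so once this uniform convergence is secured the argument closes routinely.
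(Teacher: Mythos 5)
Your argument is correct and coincides with the paper's intended proof: the paper itself disposes of this proposition with the single phrase ``by Theorem \ref{CorOb1'} and using a density argument'', relying on the density of $D(\mathcal A^2)$ in $L^2(Q_{A,1})$, the inclusion $\mathcal W\subset\mathcal V$, and the continuous dependence estimate \eqref{stimau} for the time-reversed problem, exactly as you spell out. Your identification of the trace term $\int_0^A\int_0^1 (v^n)^2(T-\bar a,\cdot)$ as the only point requiring the $C([0,T];L^2(Q_{A,1}))$ convergence rather than mere $L^2(Q)$ convergence is the right observation.
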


Observe that in the statements of the analogous results given in \cite{fJMPA} for the non divergence case there is a misprint. Indeed the constant $C$ depends on $\delta$, as one can deduce by the proofs.  The right statement is 
\begin{center}
{\it...for every $\delta \in (T,A),$ there exists $C=C(\delta)$ such that...}
\end{center}
We underline that the results are correct and in the correct way they are used to prove  \cite[Theorems 4.7 and  4.8]{fJMPA}.

\vspace{0.2cm}

 As a consequence of Proposition \ref{PropOI} one can prove, as in \cite[Theorem 4.7]{f_anona}, 
  the following null controllability result:

\begin{theorem}\label{ultimo} Assume Hypotheses  $\ref{conditionbeta}$ and $\ref{ipogenerale}$ and take $y_0 \in L^2(Q_{A,1})$. Then
 for every $\delta \in (T,A)$, if $T<A$, or for every $\delta \in (\bar a,A)$, if $A<T$, there exists a control  $f_\delta \in L^2(Q)$ such that the solution $y_\delta$ of  \eqref{1}
satisfies
\begin{equation}\label{chissa}
y_\delta(T,a,x) =0 \quad \text{a.e. } (a,x) \in (\delta, A) \times (0,1).
\end{equation}
Moreover, there exists $C=C(\delta)>0$ 
\begin{equation}\label{stimaf}
\|f_\delta\|_{L^2( Q)} \le C \|y_0\|_{L^2(Q_{A,1})}.
\end{equation}
\end{theorem}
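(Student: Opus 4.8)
The plan is to derive null controllability from the observability inequality in Proposition \ref{PropOI} via the classical Hilbert Uniqueness Method (HUM) combined with a penalization/duality argument, exactly in the spirit of \cite[Theorem 4.7]{f_anona}. The key point is that \eqref{chissa} asks only for the solution to vanish on the slice $(\delta,A)\times(0,1)$ at the final time, so the relevant observability is precisely the one in \eqref{OI}, where the initial datum $v_T$ of the adjoint problem is integrated only over $(0,\delta)$, matching the region where we do \emph{not} require controllability.

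First I would fix $\delta$ in the admissible range ($\delta\in(T,A)$ if $T<A$, or $\delta\in(\bar a,A)$ if $A<T$) and, for each $\varepsilon>0$, introduce the penalized functional
\[
J_\varepsilon(v_T):=\frac12\int_0^T\!\!\int_0^A\!\!\int_\omega v^2\,dx\,da\,dt
+\frac{\varepsilon}{2}\int_0^\delta\!\!\int_0^1 v_T^2\,dx\,da
+\int_0^A\!\!\int_0^1 y_0\,v(0,a,x)\,dx\,da,
\]
defined over the space of admissible final data $v_T$ with $v_T(A,x)=0$, where $v$ is the corresponding solution of the adjoint system \eqref{h=0}. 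Using the observability inequality \eqref{OI} one checks that $J_\varepsilon$ is continuous, strictly convex, and coercive, hence it admits a unique minimizer $v_T^\varepsilon$; the control is then obtained by setting $f_\varepsilon:=\chi_\omega v^\varepsilon$, where $v^\varepsilon$ solves \eqref{h=0} with datum $v_T^\varepsilon$. Writing the Euler--Lagrange equation for the minimizer and pairing against the controlled state $y_\varepsilon$ solving \eqref{1} yields, after an integration by parts in $(t,a,x)$ that uses the duality between \eqref{1} and \eqref{h=0}, the approximate controllability relation together with the energy bound $\|f_\varepsilon\|_{L^2(Q)}\le C\|y_0\|_{L^2(Q_{A,1})}$, with $C$ depending only on the observability constant $C(\delta)$.

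The hard part will be the passage to the limit $\varepsilon\to0$ and the verification that the limiting control actually achieves \eqref{chissa} rather than merely an approximate version. The uniform bound on $\|f_\varepsilon\|_{L^2(Q)}$ lets me extract a weakly convergent subsequence $f_\varepsilon\rightharpoonup f_\delta$ and, by the well posedness estimate \eqref{stimau} of Theorem \ref{theorem_existence}, a corresponding limit state $y_\delta$; weak lower semicontinuity of the norm then gives \eqref{stimaf}. To obtain the exact constraint \eqref{chissa} on $(\delta,A)\times(0,1)$ one must show that the penalization term forces $y_\varepsilon(T,\cdot,\cdot)\to0$ strongly on that slice; this is where the precise form of \eqref{OI}, with the $v_T$-integral restricted to $(0,\delta)$, is essential, since it is exactly the dual of the constraint being imposed on $(\delta,A)$.

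Finally I would record that the whole argument is driven by Proposition \ref{PropOI}, which in turn rests on the $\omega$-local Carleman estimate of Theorem \ref{Cor2} and the reflection/cut-off machinery already developed; since these hold uniformly in both regimes $T<A$ and $A<T$, the controllability result follows in both cases with no structural change, only the range of the admissible parameter $\delta$ differing. I expect the routine computations (convexity and coercivity of $J_\varepsilon$, the duality integration by parts) to be standard, so the genuine content lies in correctly transferring the asymmetry of \eqref{OI} into the asymmetric controllability target \eqref{chissa}.
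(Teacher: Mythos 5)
Your overall strategy---HUM with a penalized functional built on the observability inequality of Proposition \ref{PropOI}---is exactly what the paper intends: the paper gives no proof of Theorem \ref{ultimo} at all, deferring to the duality argument of \cite[Theorem 4.7]{f_anona}, and your construction of $J_\varepsilon$, the Euler--Lagrange identity, and the uniform bound $\|f_\varepsilon\|_{L^2(Q)}\le C\|y_0\|_{L^2(Q_{A,1})}$ follow that template.

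There is, however, one genuine gap. You control the linear term $\int_{Q_{A,1}} y_0\,v(0,a,x)\,dx\,da$ and deduce coercivity of $J_\varepsilon$ ``using the observability inequality \eqref{OI}'', but \eqref{OI} bounds $\int_{Q_{A,1}} v^2(T-\bar a,a,x)\,dx\,da$, not $\int_{Q_{A,1}} v^2(0,a,x)\,dx\,da$; the two coincide only in the special case $\bar a=T$ of Corollary \ref{CorOb}. As written, neither the coercivity of $J_\varepsilon$ nor the estimate of $\bigl|\int y_0\,v(0)\bigr|$ follows from \eqref{OI}. You need the intermediate estimate $\|v(0,\cdot,\cdot)\|_{L^2(Q_{A,1})}\le C\,\|v(T-\bar a,\cdot,\cdot)\|_{L^2(Q_{A,1})}$, which comes from the backward well-posedness of the adjoint system \eqref{h=0} on the time interval $[0,T-\bar a]$ (equivalently, from the implicit representation formulas \eqref{implicitformula}--\eqref{implicitformula1} along characteristics); this step must be stated and used. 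A secondary remark: with your penalization $\frac{\varepsilon}{2}\int_0^\delta\int_0^1 v_T^2$, testing the Euler--Lagrange equation against data $v_T$ supported in $(\delta,A)\times(0,1)$ already yields $\int_\delta^A\int_0^1 y_\varepsilon(T)\,v_T=0$, i.e.\ the exact constraint \eqref{chissa} for every fixed $\varepsilon>0$, so the passage $\varepsilon\to0$ is not where the difficulty lies; the point requiring care is rather that $J_\varepsilon$ is coercive only with respect to the seminorm $\bigl(\int_Q\chi_\omega v^2+\int_0^\delta\int_0^1 v_T^2\bigr)^{1/2}$, so the minimization must be set in the corresponding completion (or quotient) of the space of admissible final data.
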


\section*{Acknowledgments}

The author is a member of the Gruppo Nazionale per l'Analisi Matematica, la Probabilit\`a e le loro Applicazioni (GNAMPA) of the
Istituto Nazionale di Alta Matematica (INdAM) and she is partially  supported by the FFABR ``Fondo
per il finanziamento delle attivit\`a base di ricerca'' 2017.


\begin{thebibliography}{99}

\bibitem{aem} B. Ainseba,  Y. Echarroudi, L. Maniar
{\it  Null controllability of population dynamics with degenerate diffusion},  Differential Integral Equations (2013), 1397--1410.




\bibitem{acf}
F. Alabau-Boussouira, P. Cannarsa, G Fragnelli, {\it  Carleman
estimates for degenerate parabolic operators with applications to
null controllability}, J. Evol. Equ. \textbf{6} (2006), 161--204.

\bibitem{An} S. Ani\c{t}a,
{\it Analysis and control of age-dependent population dynamics},
Mathematical Modelling: Theory and Applications \textbf{ 11} (2000), Kluwer Academic Publishers, Dordrecht.
\bibitem{iannelli}
 V. Barbu,  M. Iannelli, M. Martcheva, {\it On the controllability of the Lotka-McKendrick model of population dynamics},  J. Math. Anal. Appl. \textbf{253} (2001), 142–-165. 
 


\bibitem{idriss}
I. Boutaayamou, Y. Echarroudi, {\it  Null controllability of a population dynamics with interior degeneracy}, accepted in Journal of Mathematics and Statistical Science.





\bibitem{b} H. Brezis, {\it  Functional Analysis,
Sobolev Spaces and Partial Differential Equations}, Springer
Science+Business Media, LLC 2011.

\bibitem{em}
Y. Echarroudi, L. Maniar, {\it Null controllability of a model in population dynamics}, Electron. J. Differential Equations \textbf{2014} (2014), 1--20.






\bibitem{f_anona} G. Fragnelli, {\it Null controllability for a degenerate population model in divergence form via Carleman estimates}, submitted.
\bibitem{fJMPA} G. Fragnelli, {\it Carleman estimates and null controllability for a degenerate population model}, Journal de Math\'ematiques Pures et Appliqu\'es, \textbf{115} (2018), 74–-126.


\bibitem{fm}
G. Fragnelli, D. Mugnai, {\it  Carleman estimates and observability
inequalities for parabolic equations with interior degeneracy},
Advances in Nonlinear Analysis \textbf{2} (2013), 339--378.

\bibitem{fm1}
G. Fragnelli, D. Mugnai, {\it  Carleman estimates, observability inequalities
and null controllability for interior degenerate
non smooth parabolic equations},
Mem. Amer. Math. Soc., \textbf{242} (2016), v+84 pp. {\it Corrigendum}, to appear.

\bibitem{fm2}
G. Fragnelli, D. Mugnai, {\it Carleman estimates for singular parabolic equations with interior degeneracy and non smooth coefficients}, Adv. Nonlinear Anal., \textbf{6} (2017), 61--84.

\bibitem{fm_hatvani} G. Fragnelli, D. Mugnai, {\it Controllability of strongly degenerate parabolic problems with strongly singular potentials},  Electron. J. Qual. Theory Differ. Equ., \textbf{50} (2018), 1--11.

\bibitem{fm_opuscola} G. Fragnelli, D. Mugnai,  {\sl Controllability of degenerate and singular parabolic problems: the double strong case with Neumann boundary conditions}, Opuscula Math. \textbf{39} (2019), 207–-225.


\bibitem{he}
 Y. He, B. Ainseba, {\it  Exact null controllability of the Lobesia botrana model with diffusion}, J. Math. Anal. Appl. \textbf{409} (2014), 530--543.


 
\bibitem{zuazua} D. Maity, M. Tucsnak and E. Zuazua, {\it Controllability and positivity constraints in population dynamics with age structuring and diffusion}, Journal de Math\'ematiques Pures et Appliqu\'es. In press, 10.1016/j.matpur.2018.12.006. 




 






\end{thebibliography}
\end{document}